\newtheorem{theorem}{Theorem}[section]
\newtheorem{proposition}[theorem]{Proposition}
\newtheorem{definition}[theorem]{Definition}
\algnewcommand\algorithmicinit{\textbf{Initialization:}}
\algnewcommand\Initialization{\item[\algorithmicinit]}
\algnewcommand\algorithmicinitend{\textbf{end initialization}}
\algnewcommand\InitializationEnd{\item[\algorithmicinitend]}
\algnewcommand\algorithmicpiv{\textbf{SCLP-pivot}}
\algnewcommand\SCLPPivot{\item[\algorithmicpiv]}
\algnewcommand\algorithmicswitch{\textbf{switch}}
\algnewcommand\algorithmiccase{\textbf{case}}
\algnewcommand\algorithmicassert{\texttt{assert}}
\algnewcommand\Assert[1]{\State \algorithmicassert(#1)}%
\newcommand{\Tt}{^{{\mbox{\tiny \bf \sf T}}}}
\def\dx{\dot{x}}
\def\dq{\dot{q}}
\newcommand{\J}{\mathcal{J}}
\newcommand{\Ss}{\mathcal{S}}
\newcommand{\N}{\mathcal{N}}
\newcommand{\setR}{\mathbb{R}}
\newcommand{\K}{{\cal{K}}}
\newcommand{\bJ}{{\mathbf{J}}}
\newcommand{\bK}{{\mathbf{K}}}
\let\oldReturn\Return
\renewcommand{\Return}{\State\oldReturn}
\title{\LARGE \bf
SCLP-Simplex Algorithm for Robust Fluid Processing Networks
}
\author{Evgeny Shindin, Roi Ben Gigi and Odellia Boni 
\thanks{E. Shindin is with IBM Research - Israel, Mount Carmel, Haifa, 3498825, Israel
        {\tt\small evgensh@il.ibm.com}}%
\thanks{R. Ben Gigi with IBM Research - Israel, Mount Carmel, Haifa, 3498825, Israel
        {\tt\small roi.ben.gigi@ibm.com}}%
\thanks{O. Boni with IBM Research - Israel, Mount Carmel, Haifa, 3498825, Israel
        {\tt\small odelliab@il.ibm.com}}%
}
\begin{document}

\maketitle
\thispagestyle{empty}
\pagestyle{empty}

\begin{abstract}
 Fluid models provide a tractable approach to approximate multiclass processing networks. This tractability is a due to the fact that optimal control for such models is a solution of a Separated Continuous Linear Programming (SCLP) problem. Recently developed revised SCLP-simplex algorithm allows to exactly solve very large instances of SCLPs in a reasonable time. Furthermore, to deal with the inherent stochasticity in arrival and service rates in processing networks, robust optimization approach is applied to SCLP models. However, a robust counterpart of SCLP problem has two important drawbacks limiting its tractability. First, the robust counterpart of SCLP problem is a huge SCLP problem itself, that can be in several orders of magnitude bigger then the nominal SCLP problem. Second, robust counterpart of SCLP is a degenerate optimization problem, that is not suitable for revised SCLP-simplex algorithm. In this paper we develop theoretical results and a corresponding algorithm that allows to preserve dimensions of nominal SCLP problem and avoid degeneracy issues during solution of its robust counterpart. 
\end{abstract}

\section{Introduction} 
\label{sec:introduction}
In a multi-class processing network, entities of different classes arrive at a service and undergo sequential processing stages by various servers. Each processing stage for each class is associated with a distinct operation, and each server has the ability to execute multiple operations. Such networks are widely used to model and analyze various real-world systems, including telecommunication networks, computer systems, manufacturing processes, transportation systems, and service-oriented systems like call-centers and healthcare facilities. Clearly, these models demand effective control over entity admissions, routing, sequencing, and operational scheduling to optimize network performance. Finding an optimal control policy requires solving stochastic dynamic programming models. However, for real-life networks such models are extremely large and computationally intractable. Alternatively, fluid models provide a viable approximation approach, offering asymptotically optimal control (see \cite{Nazarathy2009}). 

Finding optimal control for fluid models involves solving a Separated Continuous Linear Programming (SCLP) problem, which poses challenges due to its infinite-dimensional nature. Several methods has been suggested to solve SCLP problems, including time discretization  approaches \cite{Pullan1993,Luo1998,Fleischer2005,Pullan2002}, polynomial approximation methods \cite{Bampou2012} and simplex-type algorithms \cite{Weiss2008, Shindin2018, Shindin2021}. In \cite{Shindin2021} authors showed that for SCLP that originated from fluid approximation of  processing networks, a revised SCLP-Simplex algorithm outperforms time-discretization approaches both in speed and in solution quality.

In real-life networks arrival and/or processing rates often are not known exactly or may change over time. One of the common ways to deal with uncertainty in optimization problems parameters is to apply robust optimization methodology (e.g. \cite{BenTal2009}). Robust optimization assumes that the uncertain parameters reside in a region known as \emph{uncertainty set} and aims to formulate another deterministic optimization problem known as \emph{robust counterpart} (RC) such that  each solution of RC is a feasible solution of the original problem for  all possible combinations of parameters in the uncertainty set. In other words, it satisfies the original problem for the worst-case realization of the uncertain parameters. Robust counterparts of fluid processing networks were presented in \cite{Bertsimas2014} and further extended in \cite{Ship2022}, where different problem formulations and different tractable uncertainty sets were considered. 

Unfortunately, revised SCLP-simplex algorithm \cite{Shindin2021} is not suitable for solving RC of uncertain SCLP problem, by the following reasons:
\begin{compactitem}
\item RC contains intrinsic degeneracy. Both primal and dual formulations becomes degenerate, while SCLP-simplex does not support degenerate problems. 
\item RC contains many additional primal state variables. This greatly affects solution time, because these variables participate in all computationally intensive steps of SCLP-simplex algorithm. 
\end{compactitem}


The main contribution of this paper is an efficient method for solving uncertain SCLP formulations of processing networks where processing rates of different servers belong to a budgeted uncertainty set. Our approach includes a reduction method that transforms portions of the budgeted uncertainty set into box uncertainty sets, eliminating the need for additional variables or constraints. Furthermore, we introduce a cutting planes algorithm tailored to uncertain SCLPs, seamlessly integrated into the Revised SCLP-Simplex algorithm with minimal modifications of the later.  
We demonstrate both contributions for the case of one-sided budgeted uncertainty set of service effort model (\ref{eqn:ASCLP}). However,  similar approach can be applied to processing rates model and/or other polyhedral uncertainty sets. 


\section{Background}
\label{sec:background}
\subsection{Robust fluid processing networks}
\label{sec:robust_sclp}
Consider a fluid processing network with $I$ servers, $K$ buffers, and $J$ flows. Each flow can be processed by a dedicated server, and each server can handle several flows. Additionally, each flow empties a specific buffer, and there could be several flows emptying the same buffer. We denote $s(j) = i$ if flow $j$ is processed by server $i$, and $f(j) = k$ if flow $j$ empties buffer $k$. After processing, the fluid either moves to another buffer or exits the system. Let $p_{k,j}$ represent the proportion of flow $j$ that moves to buffer $k$ after processing. The following quantities are related to buffer $k$:
\begin{compactitem}
\item 
$x_k(t)$ amount of fluid at time $t$,
\item
$\alpha_k = x_k(0)$ initial amount of fluid, 
 \item
 $a_k$ constant exogenous input rate,
 \item
 $g_k \ge 0$ constant holding cost per unit/time.
\end{compactitem}
Likewise, the following quantities are related to flow $j$:
\begin{compactitem}
\item 
$\eta_j(t)$ proportion of effort of server $s(j)$ dedicated to this flow at time $t$,
\item
 $\mu_j$ service rate per flow unit if server $s(j)$ works on this flow with full effort,
 \item
 $u_j(t) = \mu_j \eta_j(t)$ actual service rate per flow unit at time $t$,
 \item
 $h_j$ constant processing cost per unit/time.
\end{compactitem}
Finally, let $\mu$ be uncertain, so that $\mu:=\mu(\xi(t)), \xi(t) \in \mathcal{U}$, where $\xi(t)$ is a perturbation vector and $\mathcal{U}$ is the uncertainty set.

The goal is to find optimal proportions of server efforts $\eta(t)$ over the planned time interval $[0,T]$, subject to buffer and server capacity constraints and for all possible realizations of service rates.
The robust optimal control for this uncertain fluid processing network can be found by solving following problem:
\begin{equation}
\begin{array}{ll}
\label{eqn:ASCLP}
\displaystyle \max_{\eta(t), x(t)} & \int_0^T (\gamma\Tt + (T-t)c\Tt) (\eta(t) {\circ} \mu(\xi(t))) \,dt,   \\
 \mbox{s.t.} &  \int_0^t G\, (\eta(s) {\circ} \mu(\xi(t))) \,ds  + x(t) = \alpha + a t, \\
 & \quad\; H \eta(t) \le b, \\
& \quad x(t), \eta(t) \ge 0, \quad 0\le t \le T,\, \xi(t) \in \mathcal{U},
\end{array}
\end{equation}
where, $b=1, c=g\Tt G, \gamma = -h$ and:
\[
\textstyle
G_{k,j} = \begin{cases} -p_{k,j}  & \text{ if } f(k) \ne j,\\
1 & \text{ if } f(k) = j,
\end{cases}\;
H_{i,j} = \begin{cases} 1 & \text{ if } s(j) = i,\\
0 & \text{ if } s(j) \ne i.
\end{cases} 
\]

Following \cite{Ship2022}, we consider $h=\gamma=0$, define a one-sided budgeted uncertainty set, and reformulate the RC problem. Let $\overline{\mu}$ be nominal service rate, $\tilde{\mu}$ be its maximal deviation from the nominal value, and $\Xi(t)$ be a perturbations vector such that: $\Xi_j(t) \in [0,1], \sum_{s(j)=i} \Xi_j(t) \le \Gamma_i$, where $\Gamma_i$ is an uncertainty budget associated with server $i$. Then, the maximal processing rate of flow $j$ is expressed by $\mu(\Xi(t)) = \overline{\mu} - \tilde{\mu} \Xi(t)$. 

In addition, we introduce following notations, that will be used through the rest of the paper:
\[
\overline{G}_{k,j} {=} G_{k,j} \overline{\mu}_j, \, \tilde{G}_{k,j} {=}{-} G_{k,j} \tilde{\mu}_j, \; \overline{c}_{j} {=} c_{j} \overline{\mu}_j, \, \tilde{c}_{j} {=} c_{j} \tilde{\mu}_j (\forall k,j),
\]
Then, the RC (\ref{eqn:ASCLP}) takes the form 
%
(see \cite{Ship2022}):
\begin{equation}
\arraycolsep=1.5 pt
\begin{array}{ll}
 \label{eqn:SCLP-RC1}
\displaystyle \max_{\eta, \beta,\gamma,y} & \int_0^T (T-t)\overline{c}\Tt \eta(t) - y(t)\,dt,   \\
 \mbox{s.t.} &  \sum\limits_{j=1}^J \int_0^t \overline{G}_{k,j} \eta_j(s)\,ds  {+} \sum\limits_{i=1}^I \Gamma_i \beta_{k, i}(t) {+}\\
 &  \sum\limits_{i=1}^I \sum\limits_{j: s(j)=i} \gamma_{k, i, j} {\le} {=} \alpha_k {+} a_k t, (\forall k,t)\\
  &  \beta_{k, i}(t) {+} \gamma_{k, i, j}(t) {\ge} \int_0^t \tilde{G}_{k,j} \eta_j(s)\,ds \\
  & \forall t, k, i, j: s(j) = i,\\
 & y(t) \ge \sum\limits_{i=1}^I \left(\Gamma_i \beta_{0,i}(t) + \sum\limits_{j: s(j)=i} \gamma_{0,i,j}(t)\right), \\
 & \beta_{0,i}(t) {+} \gamma_{0,i,j}(t) {\ge}\int_0^t \tilde{c}_j \eta_j(s)ds\, \forall i, s(j){=}i \\
 &  H \eta(t) \le b, \\
 & \quad \eta(t), y(t), \beta(t), \gamma(t) \ge 0, \; 0\le t \le T, 
\end{array}
\end{equation}

One can see that compared to the nominal problem (\ref{eqn:ASCLP} with no uncertainty), the RC contains additional $(K+1)\times(J+I) + 1$ primal state variables and same number of constraints (and slack variables), making it much harder to solve.

\subsection{SCLP-Simplex Algorithm}
In this section we recall structure of optimal solution of SCLP and SCLP-Simplex algorithm as described in \cite{Weiss2008, Shindin2021}.
In \cite{Weiss2008, Shindin2021} authors consider the following SCLP problem:
\begin{equation}
\begin{array}{ll}
\label{eqn:PWSCLP}
\displaystyle \max_{u(t), x(t)} & \int_0^T (\gamma\Tt + (T-t)c\Tt) u(t) \,dt,   \\
 \mbox{s.t.} &  \int_0^t Gu(s) \,ds  + Fx(t)  + x(t) = \alpha + a t, \\
 & \quad\; H u(t) \le b, \\
& \quad x(t), u(t) \ge 0, \quad 0\le t \le T, 
\end{array}
\end{equation}
where $F$ is $K\times L$ dimensional matrix that corresponds to additional state variables $x_{K+1},\dots, x_{K+L}$. One can check that the nominal version of (\ref{eqn:ASCLP}) is a subclass of (\ref{eqn:PWSCLP}). 

Denote by $u_{J+1},\ldots,u_{J+I}$ slacks of the second set of constraints of (\ref{eqn:PWSCLP}) and let
 $\bK = (1,\ldots,K+L)$  be the indexes of the primal state variables $x_k(t)$ and $\bJ=(1,\ldots,J+I)$  be the  indexes of the primal control variables $u_j(t)$.  
The symmetric dual to (\ref{eqn:PWSCLP}) is 
\begin{equation}
\begin{array}{ll}
\label{eqn.DWSCLP1}
\displaystyle \min_{p(t),q(t)} & \int_0^T (\alpha\Tt + (T-t)a\Tt) p(t) + b\Tt q(t) \,dt,    \nonumber  \\
\mbox{s.t.} &  \int_0^t  G\Tt\, p(s)\,ds + H\Tt q(t) \ge \gamma + c t, \\
 & \quad\; F\Tt p(t) \ge d, \nonumber \\
& \quad q(t), p(t)\ge 0, \quad 0\le t \le T,   \nonumber
\end{array}
\end{equation}
with dual state variables, including slacks,  $q_j(t),\,j\in \bJ$ and dual control variables $p_k(t),\,k\in\bK$.  
Note that {\em the dual problem runs in reversed time.}

Under easily checked feasibility and boundedness conditions, and under  non-degeneracy,  SCLP has a unique strongly dual solution. 
The optimal solution has piecewise constant primal and dual controls and continuous piecewise linear primal and dual state variables,   with breakpoints $0=t_0<t_1<\cdots <t_N=T$.   The solution is then fully described by the breakpoints, by the initial state values $x(0)=x^0,\,q(0)=q^N$,  and by the values of the controls and of the derivatives of the states $u_j^n=u_j(t)\,,p_k^n=p_k(T-t)$, $\dx_k^n=\dx_k(t),\,\dq_j^n=\dq_j(T-t)$ for $t_{n-1}<t<t_n,\,n=1,\ldots,N$.
The values of the primal and dual states at the breakpoints are $x_k^n=x_k(t_n),\,q_j^n=q_j(T-t_n),\,n=0,\ldots,N$.

The initial values, $x^0,q^N$, are optimal solutions of the {\em Boundary-LP}:
\begin{equation}
\label{eqn.boundary}
\begin{array}{lll}
\max \quad    [0 \; d\Tt] x^0,  &\quad&  \min  \quad   [b\Tt \; 0] q^N,   \\
\mbox{s.t.}  \quad     [I \; F] x^0 = \alpha,  &&  \mbox{s.t.}  \quad     [H\Tt  -I] q^N = \gamma,   \\
\qquad \quad  x^0 \ge 0,   &&   \quad \qquad  q^N\ge 0.  \\
\end{array}
\end{equation}
with $\K_0$, $\J_{N+1}$ the indexes of the basic variables $x_k^0, q_j^N$.
Note that Boundary-LP  does not involve $T$, so that $x^0,q^N$ are the same for all time horizons.

Values of the controls and slopes of states in the intervals are complementary slack basic solutions of the primal and dual {\em Rates-LP}$(\K,\J)$:
 \begin{equation}
\label{eqn.rates1}
\arraycolsep=2pt
\begin{array}{cc}
\begin{array}{ll}
\max &  [c\Tt\;0] u +  [0\;d\Tt] \dx    \\
\mbox{s.t.}& [G\;0] u + [I\;F] \dx = a,   \\
  &   [H\;I] u \quad \quad \quad  = b,  
\end{array}
&\quad
\begin{array}{l}
 \dx_k \in \setR \;\forall k \in \K,\\  \dx_k \in \setR^+ \;\forall k \notin \K, \\
 u_j = 0 \;\forall j \in \J,\\  u_j \in \setR^+ \;\forall j \notin \J,
 \end{array}
 \end{array}
\end{equation}
 \begin{equation}
\label{eqn.rates2}
\arraycolsep=2pt
\begin{array}{cc}
\begin{array}{cl}
\min&   [a\Tt\;0] p + [b\Tt\;0] \dq   \\
\mbox{s.t.}&  [G\Tt\;0] p + [H\Tt\;\mbox{-}I]\dq = c ,   \\
&   [F\Tt\;{\mbox{-}I}] p \quad \quad \quad   = d,
\end{array}
&\quad
\begin{array}{l}
\dq_j \in \setR \;\forall j \in \J,\\  \dq_j \in \setR^+ \;\forall j \notin \J, \\
p_k = 0 \;\forall k \in \K,\\  p_k \in \setR^+\; \forall k \notin \K,  
\end{array}
\end{array}
\end{equation}
where for interval $(t_{n-1},t_n)$ the primal basis is $B_n=\{u^n_j,\dx^n_k :  j\not\in\J_n,k\in\K_n\}$
with complementary dual basis $B^*_n =\{p^n_k,\dq^n_j : k\not\in\K_n,j\in\J_n\}$.

The bases 
have the following properties:  
\newline -- Compatibility to the boundary:  $\K_0 \subseteq \K_1$, $\J_{N+1} \subseteq \J_N$.
\newline -- Adjacency:  $B_n,B_{n+1}$ are adjacent:  in the pivot $B_n \to B_{n+1}$ a single basic variable $v^n$ leaves the basis and a single basic variable $w^n$ enters.

The breakpoints $t_1,\ldots,t_{N-1}$ are determined by the following equations for the interval lengths $\tau_n=t_n - t_{n\mbox{-}1}$:
\begin{equation}
\label{eqn.breakponts}
\arraycolsep=2pt
\begin{array}{ll} 
v^n{=}\dx_k {\implies} x_k(t_n){=} 0 {\implies}  & \sum_{m=1}^n \dx_k^m \tau_m {=} {-}x_k^0, \\
v^n{=}u_j {\implies} q_j(T{-}t_n) {=} 0 {\implies} &  \sum_{m=N}^{n+1}  \dq_j^m \tau_m {=} {-}q_j^N, \\
& \tau_1+\cdots+\tau_N {=} T. 
\end{array}
\end{equation}
The remaining values are determined by:
\begin{equation}
\label{eqn.othervalues}
\begin{array}{l} 
x_k(t_n) =  x_k^0 + \sum_{m=1}^n \dx_k^m \tau_m,  \\
q_j(T-t_n) = q_j^N + \sum_{m=N}^{n+1}  \dq_j^m \tau_m. 
\end{array}
\end{equation}
Given a sequence of adjacent bases $\mathcal{B} = \{B_n\}_{n=1}^N$ we can calculate all the controls and slopes of states, the breakpoints, and the values of the primal and dual states at all breakpoints. 
It is an optimal base sequence if: 
\begin{theorem}[\cite{Weiss2008}]
\label{thm.optimal}
If a sequence of bases $\{B_n\}_{n=1}^N$ are compatible with $\K_0,\J_{N+1}$ and are adjacent, and if all the values of the primal and dual state variables and the interval lengths determined by equations (\ref{eqn.boundary})--(\ref{eqn.othervalues})  are positive, then this  is an optimal solution of the SCLP.  \end{theorem}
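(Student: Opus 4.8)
The plan is to establish optimality via the standard primal--dual certificate for continuous linear programs: exhibit a primal-feasible trajectory $(x,u)$, a dual-feasible trajectory $(p,q)$, and complementary slackness between them, and then invoke weak duality for SCLP to conclude that both are optimal. Throughout, the positivity hypothesis on the $x_k^n$, $q_j^n$ and $\tau_n$ is what supplies the sign conditions, while the compatibility and adjacency properties are what make the pieces fit together across breakpoints.

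First I would assemble the candidate primal solution: set $u(t)=u^n$, $\dx(t)=\dx^n$ on each open interval $(t_{n-1},t_n)$, where $(u^n,\dx^n)$ is the basic solution of Rates-LP$(\K_n,\J_n)$ in \eqref{eqn.rates1}, take $x(0)=x^0$ from the Boundary-LP \eqref{eqn.boundary}, and let $x(t)$ be the continuous piecewise-linear function with these slopes, evaluated by \eqref{eqn.othervalues}. Since the Rates-LP equality constraints are exactly $[G\;0]u+[I\;F]\dx=a$ and $[H\;I]u=b$, differentiating the integral constraint of \eqref{eqn:PWSCLP} shows it holds on every interval; continuity of $x$ together with $x(0)=\alpha$ (which holds because $x^0$ solves the Boundary-LP and $\K_0\subseteq\K_1$) then yields the integral constraint for all $t$, and $Hu(t)\le b$ holds because the slacks $u_{J+1},\dots,u_{J+I}$ are nonnegative in \eqref{eqn.rates1}. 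The assumed positivity of the $x_k^n$ and $\tau_n$ gives $x(t)\ge 0$ and $u(t)\ge 0$, so $(x,u)$ is primal feasible. The dual trajectory $(p,q)$ is built symmetrically from \eqref{eqn.rates2}, \eqref{eqn.boundary} (with $q(0)=q^N$) and \eqref{eqn.othervalues} run in reversed time, and is dual feasible by the same argument together with $\J_{N+1}\subseteq\J_N$.

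The heart of the proof is complementary slackness, i.e.\ $\langle u(t),q(t)\rangle\equiv 0$ and $\langle x(t),p(t)\rangle\equiv 0$ on $[0,T]$. On interval $n$ the bases $B_n$ and $B_n^*$ are complementary, so $u_j^n=0$ for $j\in\J_n$ and $p_k^n=0$ for $k\in\K_n$; it therefore suffices to prove that $q_j(t)=0$ throughout interval $n$ whenever $j\notin\J_n$, and $x_k(t)=0$ throughout interval $n$ whenever $k\notin\K_n$. This is exactly where the breakpoint equations \eqref{eqn.breakponts}, the adjacency of consecutive bases, and the boundary compatibility are used: a state variable $x_k$ can pass from basic to nonbasic only by being the leaving variable $v^m$ at some breakpoint $t_m$, at which \eqref{eqn.breakponts} forces $x_k(t_m)=0$; since on the intervals where $k\notin\K_m$ one has $\dx_k^m=0$ (nonbasic in \eqref{eqn.rates1}), an induction over the intervals — using that each pivot changes exactly one index — shows $x_k$ remains $0$ until it possibly re-enters, and $\K_0\subseteq\K_1$ handles the first interval. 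The mirror statement for $q_j$ in reversed time, with $\J_{N+1}\subseteq\J_N$ handling the last interval, finishes this step, and the two inner products vanish identically.

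Finally I would invoke weak duality for SCLP: for any primal-feasible $(x,u)$ and dual-feasible $(p,q)$, the dual objective minus the primal objective equals a sum of nonnegative complementary-slackness terms, obtained by pairing the primal equality constraints with $p$, the dual constraints with $u$, and integrating by parts to reconcile the $\int_0^t Gu$ and $\int_0^t G\Tt p$ contributions (this identity is standard for problems of the form \eqref{eqn:PWSCLP}; see \cite{Weiss2008}). Having exhibited a feasible pair for which every such term is zero, the two objective values coincide, so the constructed base sequence gives an optimal SCLP solution. I expect the main obstacle to be the bookkeeping in the complementary-slackness step — tracking precisely, across the pivots $B_n\to B_{n+1}$, which state variables are identically zero on which intervals, so that complementary slackness is exact rather than merely generic; the feasibility verification and the weak-duality identity are routine once the structural lemmas of \cite{Weiss2008} are available.
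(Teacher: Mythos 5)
This theorem is quoted from \cite{Weiss2008} and the paper gives no proof of it, so there is nothing internal to compare against; your sketch follows exactly the verification argument used in the cited source: build the piecewise-constant controls and piecewise-linear states from the base sequence, check primal and dual feasibility, establish pointwise complementary slackness by the induction over pivots (a state is zero on every interval where it is nonbasic, anchored by the boundary compatibility $\K_0\subseteq\K_1$, $\J_{N+1}\subseteq\J_N$ and the breakpoint equations (\ref{eqn.breakponts})), and close with weak duality for SCLP. One small imprecision: nonnegativity of $u(t)$ (and of $p(t)$, and of the slopes $\dx_k$, $\dq_j$ for nonbasic-sign-restricted indices) does not follow from positivity of the $x_k^n$, $q_j^n$, $\tau_n$; it requires that the basic solutions of Rates-LP$(\K_n,\J_n)$ themselves satisfy the sign restrictions, i.e.\ that the bases are admissible --- a hypothesis that is implicit in the abbreviated statement here and explicit in \cite{Weiss2008}, and which you should state rather than attribute to the positivity assumption.
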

The SCLP-simplex algorithm \cite{Weiss2008, Shindin2021} solves SCLP parametrically, by increasing the time horizon $\theta T$ over $0 < \theta \le 1$, with iterations needed at $0<\theta_1 <\cdots <\theta_M =1$ as follows.
 \begin{algorithm}[h!]
\caption{SCLP-Simplex Algorithm}\label{alg:sclp_simplex}
\begin{algorithmic}[1]
\Initialization{}
\State{Solve (\ref{eqn.boundary}) to obtain $x^0, q^N$}
\State{Set $\K_0 =\{k: x^0_k > 0\}, \J_{N+1} =\{j : q^N_j > 0\}$}
\State Solve Rates-LP$(\K_0, \J_{N+1})$ and obtain $B_1$ \label{alg:simplex:line_init_solveLP}
\State{Set $\ell:= 1, \theta_0 :=0, \mathcal{B}_1 := \{B_1\}$}
\InitializationEnd{}
 \Loop 
 \State Extract $\dx^n, \dq^n, n=1,\dots,N$ from $\mathcal{B}_\ell$ \label{alg:simplex:line_extract_rates}
 \State Solve (\ref{eqn.breakponts}) to obtain $\tau$  
 \State Solve (\ref{eqn.breakponts}) with RHS $[\mathbf{0}, 1]\Tt$ to obtain $\delta\tau$
 \State Compute $x^n, \delta x^n, q^n, \delta q^n$ by (\ref{eqn.othervalues})
 \State Compute $\Delta {=} \min\limits_{\delta(\cdot) {<} 0} \left\lbrace 1, \text{-}\frac{\tau_n}{\delta \tau_n},   
 \text{-}\frac{x^n_k}{\delta x^n_k},  \text{-}\frac{q^n_j}{\delta q^n_j}\right\rbrace$
 \State Set $\theta_{\ell} {:=} \theta_{\ell-1} + \Delta$
 \If{$\theta_{\ell} \ge 1$}  \State Set $\Delta{:=} 1{-}\theta_{\ell{-}1}, \tau{:=}\tau {+}\Delta \delta\tau, x^n{:=}x^n{+}\Delta \delta x^n,$  $q^n{:=}q^n{+}\Delta \delta q^n$
 \State Compute $u^n, p^n$ from $\mathcal{B}_{\ell}$
 \Return $\tau, x^n, q^n, u^n, p^n, \dx^n, \dq^n$
 \EndIf
 \State Classify collision $\mathcal{V} = \text{arg}\min\limits_{\delta(\cdot) {<} 0} \left\lbrace \text{-}\frac{\tau_n}{\delta \tau_n},   
 \text{-}\frac{x^n_k}{\delta x^n_k},  \text{-}\frac{q^n_j}{\delta q^n_j}\right\rbrace$
 \If{$\mathcal{V} = \{\tau_{n'},\dots,\tau_{n''}\}$} \State{Remove $B_{n'},\dots, B_{n''}$ from $\mathcal{B}_\ell$}
 \If {$n'{=} 0$ or  $n''{=}N$ or $\|B_{n''+1}{\setminus} B_{n'-1}\|{=}1$} \State Go to line \ref{alg:simplex:line_iter_end}
 \Else: \State $B'{:=}B_{n'-1}, B''{:=}B_{n''+1}, B'' \setminus B' = \{v',v''\}$
 \EndIf
 \Else\If{$\mathcal{V} = \{x_k^n\}$} \State $B':=B_{n}, B'':=B_{n+1}, v'=v^n, v''=\dx_k$
 \Else {$\; \mathcal{V} = \{q_j^n\}$} \State $B':=B_{n-1}, B'':=B_{n}, v'=u_j, v''=v^n$
 \EndIf
 \EndIf
 \State Set $\K^* {=} \{k: \dx_k {\in} B'\}\setminus v'', \J^* {=} \{j: u_j {\not\in} B''\}{\setminus} v'$
 \State Solve Rates-LP$(\K^*, \J^*)$ to obtain $D$ \label{alg:simplex:line_iter_solveLP}
 \If{$D$ is not adjacent to $B'$ or $B''$} \State Solve SCLP sub-problem, get $\{D\}_{m=1}^M$
 \Else: \State $\{D\}_{m=1}^M = \{D\}$
 \EndIf
 \State Update $\mathcal{B}_\ell$ insert $\{D\}_{m=1}^M$ between $B'$ and $B''$
 \State Set $\mathcal{B}_{\ell+1} := \mathcal{B}_{\ell}, \ell:=\ell+1 $ \label{alg:simplex:line_iter_end}
 \EndLoop
\end{algorithmic}
\end{algorithm}
 For collisions at 0 or $T$ some steps of the algorithm are slightly modified.

\subsection{Cutting planes algorithm} \label{sec:cutting_planes_algo}
An alternative to reformulating RC and solving it is applying the cutting planes algorithm  \cite{Gilmore1961,Gilmore1963} to the original (nominal) problem.
In this section we briefly describe this algorithm.
Consider the following problem:
\begin{equation}
\label{eqn:general_uncertainty}
\underset{\mathbf{x}\in\mathbb{R}^{n}}{\min}\; f\left(\mathbf{x}\right),\; 
\text{s.t.}\; g_i \left(\mathbf{x;\theta_i}\right) {\le} b_i, \; \theta_i {\in} \mathcal{U}_i, \; \forall i {=} 1, \dots, I
\end{equation}
where $\{\mathcal{U}_i\}$ are uncertainty sets. The cutting planes for the problem (\ref{eqn:general_uncertainty}) has following structure. 
\begin{algorithm}
\caption{Cutting Planes Algorithm}\label{alg:cutting_planes}
\begin{algorithmic}[1]

\State{Initialization of master problem with set of nominal values $\{\bar{\theta}_i\}$ as uncertain parameters, and solving it to obtain $x^*$}
 \For{each uncertain constraint i:}
        \State compute the set $\mbox{argmax}_{\theta} g_i(x^*;\theta)$
        \State for every $\tilde{\theta} \in \mbox{argmax}_{\theta} g_i(x^*;\theta) $ satisfying   $g_i(x^*;\tilde{\theta}) > b_i$, add the constraint $g_i(x;\tilde{\theta}) \leq b_i$ to master problem
\State{Solve the master problem} obtaining solution $x^*$
\EndFor
\If{during step 2 none constraint was added to master problem} \State $x^*$ is an optimal robust solution to the problem
\Else:
\State return to step 2
\EndIf
\end{algorithmic}
\end{algorithm}

For a wide class of problems and uncertainty sets following result holds (see e.g. \cite{Mutapcic2009}):
\begin{theorem}
\label{thm:gen_cutting}
   Under appropriate assumptions cutting planes algorithm \ref{alg:cutting_planes} produces a robust optimal solution for problem (\ref{eqn:general_uncertainty}).
\end{theorem}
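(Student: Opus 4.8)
The plan is to combine three facts: the master problem is at every stage a relaxation of the full robust problem; the stopping test certifies robust feasibility of the incumbent; and — under the structural assumptions hidden in the phrase ``appropriate assumptions'' — only finitely many distinct cuts can ever be generated.

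First I would fix notation. At iteration $t$ the master problem carries, for each uncertain constraint $i$, a finite scenario set $S_i^t \subseteq \mathcal{U}_i$ together with the constraints $g_i(x;\theta) \le b_i$, $\theta \in S_i^t$; let $x^t$ denote an optimizer of this master and $\underline v_t := f(x^t)$ its optimal value. Because $S_i^t \subseteq \mathcal{U}_i$, every robust-feasible point is feasible for the master, hence $\underline v_t \le v^\star$, where $v^\star$ is the robust optimal value; moreover the scenario sets only grow across iterations, so $\underline v_t$ is nondecreasing. At this point I would invoke the standing assumption that each master stays feasible and bounded — in the SCLP setting of Section \ref{sec:robust_sclp} this is inherited from feasibility and boundedness of (\ref{eqn:ASCLP}).

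Next, consider an iteration on which the algorithm stops, i.e.\ Step 2 adds no cut. For each $i$ the separation step returns some $\tilde\theta_i \in \arg\max_{\theta \in \mathcal{U}_i} g_i(x^t;\theta)$ — this set is nonempty because $\mathcal{U}_i$ is compact and $g_i(x^t;\cdot)$ continuous, one of the ``appropriate assumptions'' — and the no-cut condition reads $g_i(x^t;\tilde\theta_i) \le b_i$, hence $\max_{\theta \in \mathcal{U}_i} g_i(x^t;\theta) \le b_i$ for every $i$. Thus $x^t$ is feasible for the full robust problem, so $f(x^t) \ge v^\star$; together with $f(x^t) = \underline v_t \le v^\star$ this forces $f(x^t) = v^\star$, i.e.\ $x^t$ is a robust optimal solution. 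So the only thing that can go wrong is nontermination.

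Bounding the number of iterations is the main obstacle, and it is precisely where the assumptions do their work. I would first record that every cut actually added is new: if the cut $g_i(x;\tilde\theta_i) \le b_i$ were already present in the master, then $x^t$ would satisfy it, contradicting the fact that it was added only because $g_i(x^t;\tilde\theta_i) > b_i$. Hence it suffices to bound the number of distinct cuts ever available. In full generality this is impossible, which is why the statement is hedged; the relevant hypothesis for this paper is that each $g_i(\cdot;\theta)$ is affine in $\theta$ and each $\mathcal{U}_i$ is a polytope (the budgeted uncertainty set of Section \ref{sec:robust_sclp} qualifies). Then $g_i(x^t;\cdot)$ is linear, so its maximum over $\mathcal{U}_i$ is attained at a vertex and the separation oracle may always return such a vertex; since a polytope has finitely many vertices, at most $\sum_i |\mathrm{vert}(\mathcal{U}_i)|$ distinct cuts exist, so the loop executes at most that many times. (If instead one only assumes $\mathcal{U}_i$ compact and $g_i$ Lipschitz in $\theta$, running the same scheme with a fixed violation tolerance $\varepsilon > 0$ terminates finitely and returns an $\varepsilon$-robust-optimal point.) A minor point to check is the case where the separation maximum is attained on a positive-dimensional face: returning any maximizing vertex is still valid, since it attains the same value and therefore still yields a violated — hence new — cut.
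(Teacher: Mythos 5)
Your argument is correct, but note that the paper itself does not prove this statement at all: Theorem \ref{thm:gen_cutting} is stated as a known result and simply attributed to the literature (``see e.g.\ \cite{Mutapcic2009}''), with the phrase ``appropriate assumptions'' standing in for the hypotheses of that reference. So your proposal is not a variant of the paper's proof but a self-contained substitute for it, and it is worth comparing the two. The cited cutting-set analysis works at the level of general convex $g_i$ and compact $\mathcal{U}_i$ and proves finite termination only up to a violation tolerance $\varepsilon>0$, yielding an $\varepsilon$-robust-feasible, lower-bound-optimal point; your proof instead obtains \emph{exact} finite termination by specializing to $g_i$ affine (or concave) in $\theta$ and polytopal $\mathcal{U}_i$, so that the separation oracle can be restricted to vertex maximizers and the relaxation/feasibility-certificate argument closes after at most $\sum_i |\mathrm{vert}(\mathcal{U}_i)|$ cuts. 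That specialization is exactly the regime the paper actually uses (budgeted uncertainty in the Rates-LP, where the inner problem (\ref{eqn:RSCLPN_sub_sub}) is an LP over a polytope and (\ref{eqn:RLP_constr_k_sol}) exhibits the vertex solutions), so your route arguably gives a sharper statement for this paper's purposes, at the price of not covering the general convex case the citation does. Two small caveats you half-address and should make explicit: Algorithm \ref{alg:cutting_planes} as written adds a cut for \emph{every} element of $\arg\max_\theta g_i(x^*;\theta)$, which need not be a finite set, so your finiteness count requires the (standard, harmless) refinement that the oracle returns a maximizing vertex; and attainment of a master optimizer at every iteration must be folded into the ``appropriate assumptions'' (automatic in the LP setting here, not in general).
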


\section{Reduction of uncertainty set}
\label{sec:reduction}
Analyzing the structure of the uncertain SCLP problem (\ref{eqn:ASCLP}) and the structure of uncertainty set one can reduce the size of the RC. In this section we present an alternative form of the RC of the server-effort model for the one-sided budgeted uncertainty set, discuss reduction of this RC, and present a reduction algorithm. 

\begin{theorem}
\label{thm:new-rc}
   The RC of the uncertain SCLP problem (\ref{eqn:ASCLP}) can be expressed by:
\begin{equation}
\arraycolsep=1.5 pt
\begin{array}{ll}
 \label{eqn:SCLP-RC2}
\displaystyle \max_{\eta,\beta,\gamma} & \int_0^T (T\text{-}t)\mkern-6mu\left(\overline{c}\Tt \eta(t) {-}\mkern-6mu\sum\limits_{i=1}^I \mkern-3mu(\Gamma_i \beta_{0,i}(t) {+} \mkern-18mu \sum\limits_{j: s(j)=i}\mkern-18mu\gamma_{0,i,j}(t)\mkern-3mu)\mkern-6mu\right)\mkern-6mudt,   \\
 \mbox{s.t.} &  \int_0^t \mkern-6mu\left(\sum\limits_{j=1}^J  \overline{G}_{k,j} \eta_j(s) {+} \sum\limits_{i=1}^I \Big( \Gamma_i \beta_{k, i}(s) {+} \right. \\
 &   \left. \sum\limits_{j: s(j)=i}\mkern-18mu \gamma_{k, i, j}(s)\Big)\mkern-6mu\right) \,ds {+} x_k(t) {=} \alpha_k {+} a_k t, (\forall k,t)\\
  &  \beta_{k, i}(t) {+} \gamma_{k, i, j}(t) {\ge} \tilde{G}_{k,j} \eta_j(t) \,\forall t, k, i, s(j) {=} i,\\
 & \beta_{0,i}(t) {+} \gamma_{0,i,j}(t) {\ge} \tilde{c}_j \eta_j(t), \forall t,i, s(j){=}i, \\
 &  H \eta(t) \le b, \\
 & \; x(t), \eta(t), \beta(t), \gamma(t) \ge 0, \; 0\le t \le T.
\end{array}
\end{equation}
\end{theorem}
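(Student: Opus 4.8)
The plan is to derive (\ref{eqn:SCLP-RC2}) from the known RC formulation (\ref{eqn:SCLP-RC1}) by exploiting the fact that the auxiliary variables $y(t)$, $\beta(t)$ and $\gamma(t)$ in (\ref{eqn:SCLP-RC1}) enter the problem only through \emph{pointwise} linear inequalities, so that the worst-case (supremum over the uncertainty set) can be taken inside the time integral rather than being dualized at the level of integrated quantities. Concretely, I would first recall that the RC is obtained by requiring the constraints of (\ref{eqn:ASCLP}) to hold for every $\xi(t) \in \mathcal{U}$, which for the budgeted set means: for each $k$ and $t$, $\int_0^t \sum_j \overline{G}_{k,j}\eta_j(s)\,ds + \max_{\Xi(s)\in\mathcal{U}}\int_0^t \sum_j \tilde{G}_{k,j}\eta_j(s)\Xi_j(s)\,ds + x_k(t) = \alpha_k + a_k t$, and similarly for the objective with $\tilde{c}$ in place of $\tilde{G}_{k,\cdot}$. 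Because the constraint on $\Xi$, namely $\Xi_j(s)\in[0,1]$ and $\sum_{s(j)=i}\Xi_j(s)\le\Gamma_i$, is \emph{separable in $s$} (it must hold for each $s$ independently) and the integrand is linear in $\Xi(s)$, the maximization over the function $\Xi(\cdot)$ decomposes into a pointwise maximization $\max_{\Xi(s)} \sum_j \tilde{G}_{k,j}\eta_j(s)\Xi_j(s)$ for each fixed $s$, integrated afterwards.

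Next I would dualize this inner pointwise linear program. For fixed $s$ and fixed server $i$, the program $\max \sum_{s(j)=i} \tilde{G}_{k,j}\eta_j(s)\Xi_j(s)$ subject to $0\le\Xi_j(s)\le 1$ and $\sum_{s(j)=i}\Xi_j(s)\le\Gamma_i$ has, by LP duality, the value $\min \Gamma_i\beta_{k,i}(s) + \sum_{s(j)=i}\gamma_{k,i,j}(s)$ subject to $\beta_{k,i}(s)+\gamma_{k,i,j}(s)\ge \tilde{G}_{k,j}\eta_j(s)$ and $\beta,\gamma\ge 0$ — here $\beta_{k,i}$ is the multiplier of the budget constraint and $\gamma_{k,i,j}$ the multipliers of the upper bounds $\Xi_j\le 1$. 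Summing over $i$ and substituting this dual value back into the constraint (and, with $\tilde{c}$, into the objective via the index $k=0$), and then — crucially — observing that the outer SCLP is a \emph{maximization} so that the "$\min$" over $(\beta,\gamma)$ can be merged with the outer "$\max$" into a single joint maximization over $(\eta,\beta,\gamma,x)$, yields exactly (\ref{eqn:SCLP-RC2}). The $y(t)$ variable of (\ref{eqn:SCLP-RC1}), which was an integrated upper bound on the objective perturbation, is eliminated because the perturbation now appears directly, pointwise and linearly, inside the $(T-t)$-weighted integrand; its associated constraint $\beta_{0,i}(t)+\gamma_{0,i,j}(t)\ge \int_0^t \tilde{c}_j\eta_j(s)\,ds$ is replaced by the pointwise version $\beta_{0,i}(t)+\gamma_{0,i,j}(t)\ge \tilde{c}_j\eta_j(t)$.

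The main obstacle I anticipate is the careful justification that the pointwise dual variables $\beta(\cdot),\gamma(\cdot)$ can be taken as \emph{measurable functions} of $t$ for which the integral objective is well-defined, and that replacing the integrated constraints of (\ref{eqn:SCLP-RC1}) by the pointwise constraints of (\ref{eqn:SCLP-RC2}) is an \emph{equivalence} and not merely a relaxation — i.e. that an optimal $(\beta,\gamma)$ of (\ref{eqn:SCLP-RC2}) can be chosen so that the integrated inequalities of (\ref{eqn:SCLP-RC1}) are recovered, and conversely. One direction is easy: integrating the pointwise inequalities of (\ref{eqn:SCLP-RC2}) from $0$ to $t$ immediately gives feasibility in (\ref{eqn:SCLP-RC1}) with $y(t) := \int_0^t$ of the RHS. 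For the reverse direction I would argue that given a feasible point of (\ref{eqn:SCLP-RC1}), one can replace its $\beta,\gamma$ by the \emph{pointwise optimal} dual solutions described above without decreasing the objective or violating the buffer equality, using strong LP duality at each $t$ together with a measurable-selection argument (the pointwise optimal basis is piecewise constant in $t$ since $\eta$ is, so measurability is not actually delicate in this setting). Once both inclusions of feasible sets are shown to preserve objective value, the two formulations have the same optimal value and the theorem follows; I would also note that $x(t)\ge 0$ is inherited unchanged and that the weighting $(T-t)$ and the integration by parts used to pass from (\ref{eqn:ASCLP}) to the $(T-t)c\Tt$ form are already established in the background section, so no new manipulation there is needed.
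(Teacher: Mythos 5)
Your proposal is correct and follows essentially the same route as the paper's proof: reformulate each uncertain constraint (and the objective) with an inner worst-case maximization over $\Xi$, decompose it per server, dualize the inner LP to get the $\beta,\gamma$ variables with pointwise constraints, and substitute back, merging the inner minimization with the outer maximization. The only cosmetic difference is that the paper dualizes the time-integrated inner problem directly via symmetric duality of continuous linear programs (citing Shapiro), whereas you first interchange the maximization with the time integral and apply finite-dimensional LP duality pointwise, adding the measurability remark that the paper leaves implicit.
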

\begin{proof}
The proof of the theorem given in Appendix.       
\end{proof} 

Unlike (\ref{eqn:SCLP-RC1}), in formulation (\ref{eqn:SCLP-RC2}) we add primal control variables instead of primal states. 

Unfortunately, the reformulation (\ref{eqn:SCLP-RC2}) does not reduce the problem size, so we still need to apply further reduction. Recall uncertain constraint of (\ref{eqn:ASCLP}) with one-sided budgeted uncertainty set as it described in Section \ref{sec:robust_sclp}. To ensure that constraint $k$ holds under all possible realizations of $\Xi(t)$ one should consider the following:
\begin{equation}
\label{eqn:RSCLP_constr_k}
\begin{array}{l}
    \sum\limits_{j=1}^J \int\limits_0^t \overline{G}_{k,j} \eta_j(s) ds + z_k(t) \le \alpha_k + a_k t \\
    \begin{array}{ll}
        z_k(t) =  &  \max\limits_{\Xi(t)} \sum\limits_{j=1}^J \int\limits_0^t \Xi_{j}(s) \tilde{G}_{k,j} \eta_j(s) ds \\
        \text{s.t.} & \sum\limits_{j: s(j) = i} \Xi_{j}(t) \le \Gamma_i\, \forall i, \quad 0 \le \Xi(t) \le 1 
    \end{array}
\end{array} 
\end{equation}
Recall that elements of $G$ are either proportion of the endogenous input flows $-p_{k,j}$, or $1$ for output flows from buffer $k$. Thus, for the flow $j$ that does not transfer the fluid into the buffer $k$ we have $\tilde{G}_{k,j} = 0$, and for $j: f(j)=k$ we have $\tilde{G}_{k,j} = -\tilde{\mu}_j$. Moreover, one can see that optimization problem in (\ref{eqn:RSCLP_constr_k}) could be divided into $I$ sub-problems that will take following form:
\begin{equation}
\label{eqn:RSCLP_sub}
    \begin{array}{ll}
        z_{k,i}(t) =  & \max\limits_{\Xi(t)} \sum\limits_{j \in \N_{i,k}} \int\limits_0^t \Xi_{j}(s) \tilde{G}_{k,j} \eta_j(s) ds \\
        \text{s.t.} & \sum\limits_{j \in \N_{i,k}} \Xi_{j}(t) \le \Gamma_i, \quad 0 \le \Xi(t) \le 1 
    \end{array}
\end{equation}
where $\N_{i,k} = \{j: \{s(j) {=} i\} \cap \{\tilde{G}_{k,j} {>} 0\}\}$. One can see, that if $\|\N_{i,k}\| \le \Gamma_i$ problem (\ref{eqn:RSCLP_sub}) has trivial solution $\Xi(t) = 1$ with objective values $z_{k,i}(t) = \sum_{j \in \N_{i,k}} \int_0^t \tilde{G}_{k,j} \eta_j(s) ds$.
This leads to the following algorithm:
\begin{algorithm}
\caption{Reduction Algorithm For Robust SCLP}
\label{alg:reduction}
\begin{algorithmic}[1]
\For{$(k=1; k{++}; k \le K)$}
    \State Set $N_i = 0 \forall i, \mathcal{R}_k =\emptyset$
    \For{$(j=1; j{++}; j \le J)$}
        \If{$G_{k,j} > 0$}
           \State Set $N_{s(j)} = N_{s(j)} + 1$ 
        \EndIf
    \EndFor
    \For{$(j=1; j{++}; j \le J)$}
        \If{$N_{s(j)} \le \Gamma_i$}
            \State $\overline{G}^*_{k,j} = \overline{G}_{k,j} + \tilde{G}_{k,j}$
        \Else:
            \State $\overline{G}^*_{k,j} =  \overline{G}_{k,j},\; \mathcal{R}_k = \mathcal{R}_k  \cup j$
        \EndIf                    
    \EndFor
\EndFor
\end{algorithmic}
\end{algorithm}

Applying algorithm  \ref{alg:reduction} to problem (\ref{eqn:ASCLP}) we obtain an uncertain SCLP problem, where the first set of constraints takes the following form:
\begin{equation}
\textstyle
    {\int_0^t}\mkern-6mu\left({\sum\limits_{j=1}^J}  \overline{G}^*_{k,j} \eta_j(s) {+} {\sum\limits_{j \in \mathcal{R}_k}} \tilde{G}_{k,j} \Xi_j(s) \eta_j(s)\mkern-6mu\right) ds {\le} \alpha_k {+} a_k t
\end{equation}
This problem has the reduced set of the uncertain parameters and could be further transformed into the RC, or solved by a cutting planes algorithm that will be presented in Section \ref{sec:cutting}. Note that the objective function of (\ref{eqn:ASCLP}) could be treated similarly.

\section{Cutting planes for uncertain SCLP}
\label{sec:cutting}

In this section, we present a cutting planes algorithm for uncertain SCLP, derived from the application of cutting planes algorithm \ref{alg:cutting_planes} to problem (\ref{eqn:ASCLP}). Recall that the RC of uncertain SCLP (\ref{eqn:SCLP-RC2}) is an SCLP problem, and SCLP has an optimal solution where unknown controls $\eta(t)$ are piecewise constant functions of $t$. Thus, for each time interval $n=1,\dots,N$ where $\eta(t) = \eta^n$ is constant, the solution of problem (\ref{eqn:RSCLP_sub}) does not depend on $t$,  and hence problem (\ref{eqn:RSCLP_sub}) can be considered as:
\begin{equation}
\label{eqn:RSCLPN_sub}
    \begin{array}{ll}
        z_{k,i} =  &  \max\limits_{\Xi} \sum\limits_{j \in \N_{i,k}} \sum\limits_{n=1}^N \Xi_{j,n} \tilde{G}_{k,j} \eta^n_j \\
        \text{s.t.} &  \sum\limits_{j \in \N_{i,k}} \Xi_{j,n} \le \Gamma_i, \quad 0 \le \Xi \le 1 
    \end{array}
\end{equation}
It is easy to check that this problem could be further separated into the set of smaller sub-problems for each $n$:
\begin{equation}
\label{eqn:RSCLPN_sub_sub}
    \begin{array}{ll}
         z_{k,i,n} =  & \max\limits_{\Xi} \sum\limits_{j \in \N_{i,k}} \Xi_{j,n} \tilde{G}_{k,j} \eta^n_j \\
        \text{s.t.} &  \sum\limits_{j \in \N_{i,k}} \Xi_{j,n} \le \Gamma_i, \quad 0 \le \Xi \le 1 
    \end{array}
\end{equation}
Note that according to SCLP-simplex algorithm \ref{alg:sclp_simplex}, $\eta^n$ is the solution of Rates-LP (\ref{eqn.rates1}), and hence we can plug in (\ref{eqn:RSCLPN_sub_sub}) back into Rates-LP. Applying a similar approach to the objective, we get the following optimization problem:
 \begin{equation}
\label{eqn:un_rates1}
\arraycolsep=1.5pt
\begin{array}{cc}
\begin{array}{ll}
\max &  [(\overline{c} -\tilde{c}\Xi)\Tt\;0] \eta    \\
\mbox{s.t.}& (\overline{G} + \tilde{G}\Xi) \eta + I \dx = a,   \\
  &   [H\;I] \eta \quad \quad \quad  = b,  
\end{array}
&\;
\begin{array}{l}
 \dx_k \in \setR \;\forall k \in \K,\\  \dx_k \in \setR^+ \;\forall k \notin \K, \\
 \eta_j = 0 \;\forall j \in \J,\\  \eta_j \in \setR^+ \;\forall j \notin \J,
 \end{array}
 \end{array}
\end{equation}
where $ 0 \le \Xi \le 1,\, \sum_{j: s(j)=i} \Xi_{j} \le \Gamma_i \forall i$.

It follows that optimal solution of Robust SCLP (\ref{eqn:SCLP-RC2}) could be obtained from the sequence of robust optimal solutions of uncertain Rates-LP (\ref{eqn:un_rates1}). The latter could be solved by the cutting planes algorithm and hence we discuss the cutting planes algorithm for (\ref{eqn:un_rates1}).

One can observe that the second set of constraints in (\ref{eqn:un_rates1}) does not depend on uncertainty. Moreover, for $k \in \K$, constraint $k$ holds for any realization of uncertainty, so we only need to solve (\ref{eqn:RSCLPN_sub_sub}) for $k \notin \K$.
Furthermore, solving problem (\ref{eqn:RSCLPN_sub_sub}) for specific $k, i, n$ we obtain:
\begin{equation}
\label{eqn:RLP_constr_k_sol}
\begin{cases}
\Xi_{k,j} {=} 1 & \mkern-6mu j {\in} \Ss^k_{i, \eta^*}(\lfloor \Gamma_i \rfloor), \\
\Xi_{k,j} {=} (\Gamma_i {-} \lfloor \Gamma_i \rfloor)\Psi_{k,i} & \mkern-6mu j {\in} \Ss^k_{i, \eta^*}(\lceil \Gamma_i \rceil) {\setminus} \Ss^k_{i, \eta^*}(\lfloor \Gamma_i \rfloor), \\
0  & \mkern-6mu \text{else}, 
\end{cases}
\end{equation}
where $\Ss^k_{i, \eta}(L)$ is any set of indices of the greatest $L$ elements in the vector $\tilde{G}_k \circ \eta$ satisfying $s(j) {=} i$ and $\tilde{G}_{k,j} \eta_j {>} 0$, and
$\Psi_{k,i}= \| \Ss^k_{i, \eta^*}(\lceil \Gamma_i \rceil) {\setminus} \Ss^k_{i, \eta^*}(\lfloor \Gamma_i \rfloor)\|$.
It should be noted that the optimal solution (\ref{eqn:RLP_constr_k_sol}) may not be unique and depends on the choice of $\Ss^k_{i, \eta}(L)$. It is evident from (\ref{eqn:RLP_constr_k_sol}) that $\Xi_{k,j}$ takes a finite set of values, enabling the enumeration of all possible combinations of these values as $1,\dots,M$. Consequently, for any $\eta^* \in \mathbb{R}^J$ and for each $k$, there exists an optimal solution of (\ref{eqn:RSCLPN_sub_sub}) in the form of (\ref{eqn:RLP_constr_k_sol}) denoted by $\hat{\Xi}^m_{k}$.
We define $\hat{G}_{k} = \tilde{G}_k \circ \hat{\Xi}^m_k$ as the worst-case realization of parameters for constraint $k$, and we define:
\begin{equation}
\label{eqn:worst_case_constr_k}
\textstyle
    \sum_{j} \left(\overline{G}_{k,j} + \hat{G}_{k,j} \right) \eta^*_j \le a_k
\end{equation}
is a worst case realization of constraint $k$ for the some $\eta^*$ if $\hat{G}_{k} = \tilde{G}_k \circ \hat{\Xi}^m_k$, where $\hat{\Xi}^m_k$ is optimal solution of (\ref{eqn:RSCLPN_sub_sub})  for this $\eta^*$. Similarly, we define $\Ss^0_{i,\eta^*}(L)$ and $\hat{\Xi}^m_{0,j}$ for the objective function, denoting by $\hat{c} = \tilde{c} \circ \hat{\Xi}^m_0$ the worst-case realization of the objective coefficients.

These leads to the following cutting planes algorithm for the Rates-LP$(\K^*, \J^*)$.
\begin{algorithm}
\caption{Cutting Planes Algorithm For Rates-LP}
\label{alg:cutting_planes_LP}
\begin{algorithmic}[1]
\Loop $\quad \ell=1,\dots$
\State Solve Rates-LP/LP$^*(\K^*, \J^*)$ and get $\eta^*$
 \For{$k\not\in \K^*$}
        \State For all $i$ compute $\Ss^{k,\ell}_{i, \eta^*}(\lfloor \Gamma_i \rfloor), \Ss^{k,\ell}_{i, \eta^*}(\lceil \Gamma_i \rceil)$
        \State Get $\hat{\Xi}^\ell_k$ from (\ref{eqn:RLP_constr_k_sol}) and set $\hat{G}^\ell_{k} {:=} \tilde{G}_k \circ \hat{\Xi}^\ell_k$
        \If{$\sum_{j} \left(\overline{G}_{k,j} + \hat{G}^\ell_{k,j} \right) \eta^*_j > a_k$}
            \State Add constraint (\ref{eqn:worst_case_constr_k}) to Rates-LP$(\K^*, \J^*)$
        \EndIf
\EndFor
\If{none constraint was added} 
    \State Get $\Ss^{k,\ell}_{i, \eta^*}$ for $\lfloor \Gamma_i \rfloor, \lceil \Gamma_i \rceil$, $\hat{G}^\ell_{k} {:=} \tilde{G}_k \circ \hat{\Xi}^\ell_k$ $\forall k {\in} \K^*$
    \State Get $\Ss^0_{i, \eta^*}$ for $\lfloor \Gamma_i \rfloor, \lceil \Gamma_i \rceil$, $\hat{c} {:=} \tilde{c} \circ \hat{\Xi}_0$, $z {=} (\overline{c} {-} \hat{c})\Tt \eta$  
    \Return $\eta^*, \dx {=} \min_\ell a_k {-} \sum_{j} \left(\overline{G}_{k,j} {+} \hat{G}^\ell_{k,j} \right) \eta^*_j$\label{alg:cutting_planes_LP:line_return}
\EndIf
\EndLoop
\end{algorithmic}
\end{algorithm}

\begin{theorem}
\label{thm:prim_rates}
    Let $\dx, \eta$  be the solution obtained from the Algorithm \ref{alg:cutting_planes_LP}, then the same $\dx, \eta$ is the robust optimal solution of the uncertain Rates-LP$(\K^*,\J^*)$ (\ref{eqn:un_rates1}).
\end{theorem}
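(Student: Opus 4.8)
The plan is to recognize the uncertain Rates-LP (\ref{eqn:un_rates1}) as an instance of the generic robust problem (\ref{eqn:general_uncertainty}), to observe that Algorithm \ref{alg:cutting_planes_LP} is precisely the specialization of the generic cutting-planes scheme (Algorithm \ref{alg:cutting_planes}) to this instance, and then to discharge the hypotheses of Theorem \ref{thm:gen_cutting} for this concrete case — the bulk of the work being the exactness and finiteness of the separation oracle and the treatment of the rows $k\in\K^*$ and of the objective, which Algorithm \ref{alg:cutting_planes_LP} only touches at termination. First I would isolate the genuinely uncertain constraints of (\ref{eqn:un_rates1}). The block $[H\;I]\eta=b$ is uncertainty-free; for $k\in\K^*$ the corresponding row of $(\overline{G}+\tilde{G}\Xi)\eta+I\dx=a$ carries the free variable $\dx_k\in\setR$, so it is satisfiable for every realization $\Xi$ and imposes no restriction on $\eta$; hence the only robust constraints are the rows $k\notin\K^*$, equivalent to $\sum_j\overline{G}_{k,j}\eta_j+\max_{\Xi}\sum_j\tilde{G}_{k,j}\Xi_j\eta_j\le a_k$ over the budgeted set, together with the objective after an epigraph reformulation $z\le(\overline{c}-\tilde{c}\Xi)\Tt\eta$. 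This is exactly the form (\ref{eqn:general_uncertainty}) with compact polyhedral uncertainty sets and a linear (hence convex) master.

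Next I would show that the separation oracle of Algorithm \ref{alg:cutting_planes_LP} is exact with finite range. For fixed $\eta^*$ the inner problem (\ref{eqn:RSCLPN_sub_sub}) decouples over servers $i$ into continuous-knapsack LPs over $\{0\le\Xi\le1,\ \sum_{j\in\N_{i,k}}\Xi_j\le\Gamma_i\}$ with coefficients $\tilde{G}_{k,j}\eta^*_j$ that are nonnegative by the very definition of $\N_{i,k}$; its optimum is attained by the greedy rule (\ref{eqn:RLP_constr_k_sol}) — ones on the $\lfloor\Gamma_i\rfloor$ largest elements and the residual budget spread over the next tier — so line 5 indeed returns a point of $\mathrm{argmax}_\Xi g_k(\eta^*;\Xi)$. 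Since the tier sizes are fixed by $\lfloor\Gamma_i\rfloor,\lceil\Gamma_i\rceil$ and the only freedom is the tie-breaking choice of $\Ss^k_{i,\eta}(L)$, the set of distinct realizations $\hat{\Xi}^m_k$ producible over all $\eta^*$ is finite. Each pass of the loop therefore either terminates or appends at least one cut (\ref{eqn:worst_case_constr_k}) from this finite pool that is violated by the current $\eta^*$ and, once present, cannot be violated again; hence Algorithm \ref{alg:cutting_planes_LP} stops after finitely many iterations. This finite-termination argument, and the verification that no cut is ever re-added, is the step I expect to require the most care.

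Finally I would close the correctness loop. At termination no violated worst-case constraint remains, so for every $k\notin\K^*$ and every admissible $\Xi$, $\sum_j(\overline{G}_{k,j}+\tilde{G}_{k,j}\Xi_j)\eta^*_j\le\sum_j(\overline{G}_{k,j}+\hat{G}^\ell_{k,j})\eta^*_j\le a_k$ for the maximizing $\ell$; the rows $k\in\K^*$ are met with equality by the free slopes, and line \ref{alg:cutting_planes_LP:line_return} returns for each such $k$ the most negative slope over the encountered worst cases, which is exactly the slope the outer SCLP-simplex algorithm needs to remain valid for every realization; hence $(\dx,\eta)$ is robust feasible. For optimality, the master LP solved in the last iteration enforces only a subset of the robust constraints — each cut (\ref{eqn:worst_case_constr_k}) is the realization $\Xi=\hat{\Xi}^m_k$ of a robust constraint — so its feasible set contains that of the robust Rates-LP, and with the worst-case objective $(\overline{c}-\hat{c})\Tt\eta$ of line 12 (equivalently, epigraph cuts $z\le(\overline{c}-\tilde{c}\Xi)\Tt\eta$ on the master) its optimum is an upper bound on the robust optimum; since $(\dx,\eta)$ attains this value and is robust feasible, it is robust optimal. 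Once the hypotheses above are in place this last paragraph is also subsumed by a direct appeal to Theorem \ref{thm:gen_cutting}.
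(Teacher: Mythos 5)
Your proposal is correct and follows essentially the same route as the paper: the paper's proof of Theorem \ref{thm:prim_rates} is a one-line appeal to Theorem \ref{thm:gen_cutting}, observing that (\ref{eqn:un_rates1}) is a linear program with a polyhedral uncertainty set solved by the cutting-planes scheme. The additional work you carry out---exactness of the greedy separation (\ref{eqn:RLP_constr_k_sol}), finiteness of the cut pool and termination, the role of the free slopes for $k\in\K^*$, and the feasibility-plus-relaxation-bound argument---is precisely the verification of the ``appropriate assumptions'' that the paper's proof leaves implicit.
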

\begin{proof}
The proof of the theorem given in Appendix.       
\end{proof} 

Recall that the SCLP-simplex algorithm \ref{alg:sclp_simplex} provides the optimal solution for an SCLP problem with a different set of parameters at each step. Thus, instead of solving the nominal SCLP problem up to the target time horizon $T_{\text{goal}}$ and then applying the cutting planes algorithm to the nominal solution of the SCLP, we apply the cutting planes algorithm \ref{alg:cutting_planes} at each iteration of SCLP-simplex, obtaining a solution of robust SCLP along the entire parametric line. This leads to the following.
\begin{definition}
\label{def:modification}
To solve the uncertain SCLP problem \ref{eqn:ASCLP}, the SCLP-simplex algorithm \ref{alg:sclp_simplex} requires the following modifications:

(i) After lines \ref{alg:simplex:line_init_solveLP} and \ref{alg:simplex:line_iter_solveLP}, the cutting planes algorithm \ref{alg:cutting_planes_LP} is applied to the nominal solution of the Rates-LP.

(ii) At line \ref{alg:simplex:line_extract_rates}, $\dx$ is the result of Algorithm \ref{alg:cutting_planes_LP} (line \ref{alg:cutting_planes_LP:line_return}), and $\dq$ is extracted from the dual solution obtained at the final stage of Algorithm \ref{alg:cutting_planes_LP}.
\end{definition}

\begin{theorem}
\label{thm:sclp_alg}
The SCLP-simplex algorithm \ref{alg:sclp_simplex}, with modifications \ref{def:modification}, provides a robust optimal solution to the uncertain SCLP. That is, $\eta(t), x(t)$ obtained from the modified algorithm are optimal for the RC problem (\ref{eqn:SCLP-RC2}).
\end{theorem}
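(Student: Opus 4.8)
The plan is to combine three facts established earlier: (a) Theorem~\ref{thm:new-rc}, which says the RC of the uncertain SCLP is the (deterministic) SCLP~(\ref{eqn:SCLP-RC2}); (b) the structural characterization of optimal SCLP solutions (Theorem~\ref{thm.optimal}), namely that an optimal solution is a finite sequence of adjacent, boundary-compatible bases whose associated Rates-LPs and Boundary-LP yield positive states and interval lengths; and (c) Theorem~\ref{thm:prim_rates}, which certifies that Algorithm~\ref{alg:cutting_planes_LP} returns the robust-optimal solution of the uncertain Rates-LP~(\ref{eqn:un_rates1}) at each invocation. The argument is essentially a ``lifting'' step: the cutting-planes subroutine produces, for each candidate basis, exactly the rate vector $(\dx,\dq,\eta,p)$ that the ordinary SCLP-simplex would need if it were run directly on~(\ref{eqn:SCLP-RC2}), so the modified algorithm traces the same parametric path and terminates at the same optimum.

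First I would make precise the correspondence between a basis of the big RC~(\ref{eqn:SCLP-RC2}) and a basis of the nominal problem together with a choice of worst-case realization. Concretely, for a fixed interval with constant control $\eta^n$, the extra RC variables $\beta,\gamma$ and the extra constraints are, by the derivation in Section~\ref{sec:cutting}, completely determined once the active index sets $\Ss^k_{i,\eta^n}(\lfloor\Gamma_i\rfloor)$, $\Ss^k_{i,\eta^n}(\lceil\Gamma_i\rceil)$ are known; equations~(\ref{eqn:RSCLPN_sub_sub})--(\ref{eqn:worst_case_constr_k}) show that plugging the optimal $\hat\Xi^m_k$ back in collapses the RC Rates-LP to the uncertain Rates-LP~(\ref{eqn:un_rates1}) with a finite menu of realizations indexed by $m=1,\dots,M$. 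I would state this as a lemma: a basis sequence for~(\ref{eqn:SCLP-RC2}) satisfying the hypotheses of Theorem~\ref{thm.optimal} is in bijection (on the $\eta,\dx,\dq,p$ components) with a nominal basis sequence each of whose Rates-LPs has been augmented by the cut~(\ref{eqn:worst_case_constr_k}) for the active $\hat\Xi^m_k$. Then I would verify that the modifications in Definition~\ref{def:modification} are exactly what is needed: modification (i) ensures the rate vector extracted at each step is the robust-optimal one (by Theorem~\ref{thm:prim_rates}), and modification (ii) ensures the state slopes $\dx$ and $\dq$ fed into the breakpoint equations~(\ref{eqn.breakponts}) are the ones consistent with the worst-case realization, so the collision analysis, the parametric update in $\theta$, and the re-solves all proceed verbatim as in Algorithm~\ref{alg:sclp_simplex}.

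Having set this up, the main body of the proof is an induction on the SCLP-simplex iterations $\ell=1,\dots,M$, with the invariant: after iteration $\ell$, the base sequence $\mathcal{B}_\ell$ (together with the recorded worst-case index choices) corresponds under the bijection to a boundary-compatible adjacent base sequence of~(\ref{eqn:SCLP-RC2}) on $[0,\theta_\ell T]$ whose states and interval lengths are positive. The base case is the Boundary-LP and the first Rates-LP, where~(\ref{eqn.boundary}) is unaffected by uncertainty (the $\beta,\gamma$ variables do not appear in the boundary problem since they are controls, not states — contrast with~(\ref{eqn:SCLP-RC1})) and the first Rates-LP is handled by Theorem~\ref{thm:prim_rates}. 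For the inductive step, the collision classification, removal of sub-sequences, and the local re-solve of Rates-LP$(\K^*,\J^*)$ are all run with the cutting-planes wrapper, so by Theorem~\ref{thm:prim_rates} the inserted bases $\{D\}_{m=1}^M$ are robust-optimal rate solutions, i.e. valid bases of the RC; adjacency and boundary-compatibility are preserved because they are statements about the nominal $\eta,\dx$ index sets, which the wrapper does not alter. When the algorithm reaches $\theta_\ell\ge 1$, I invoke Theorem~\ref{thm.optimal} applied to~(\ref{eqn:SCLP-RC2}): the terminal base sequence satisfies its hypotheses, hence the corresponding $\eta(t),x(t)$ (and the implicit $\beta(t),\gamma(t)$ recovered from the $\hat\Xi^m$'s) are optimal for the RC, and by Theorem~\ref{thm:new-rc} this is a robust optimal solution of~(\ref{eqn:ASCLP}).

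The step I expect to be the main obstacle is the bijection lemma — specifically, showing that \emph{every} basis visited by the modified algorithm corresponds to a genuine (feasible, complementary, adjacent) basis of the big RC, and conversely that the RC-simplex, if run directly, would never be forced into a basis outside this image. The subtlety is degeneracy: the RC is deliberately degenerate (this is the whole motivation of the paper), so the correspondence is not one-to-one at the level of bases — several RC bases project to the same nominal basis, differing only in which $\beta,\gamma$ are basic. I would handle this by arguing at the level of \emph{solutions} rather than bases: the cutting-planes subroutine selects, among the degenerate RC bases, a canonical representative via the ordering implicit in $\Ss^k_{i,\eta}(L)$, and Theorem~\ref{thm:prim_rates} guarantees the resulting primal-dual \emph{values} (not the basis label) are the robust-optimal ones, which is all that Theorem~\ref{thm.optimal} actually requires to certify optimality. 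A secondary technical point is the boundary behaviour flagged in the paper (``for collisions at $0$ or $T$ some steps are slightly modified'') and the treatment of the objective row, which by the symmetry noted after Algorithm~\ref{alg:reduction} and in Section~\ref{sec:cutting} is handled by the same $\Ss^0_{i,\eta^*}$ machinery; I would dispatch these by noting the cutting-planes wrapper touches only the Rates-LP/Boundary-LP interface and leaves those special cases structurally intact.
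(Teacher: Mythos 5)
Your overall architecture (follow the nominal simplex path, solve each Rates-LP robustly by Algorithm \ref{alg:cutting_planes_LP}, certify via Theorem \ref{thm:prim_rates}) matches the paper's, but the step you use to close the argument --- invoking Theorem \ref{thm.optimal} for the base sequence of the RC (\ref{eqn:SCLP-RC2}) --- does not go through, and this is a genuine gap rather than a technicality. Theorem \ref{thm.optimal} is a non-degeneracy certificate: its hypotheses are that consecutive bases of the problem it is applied to are \emph{adjacent} (a single variable leaves and a single variable enters) and that \emph{all} primal and dual state values and interval lengths produced by (\ref{eqn.boundary})--(\ref{eqn.othervalues}) are strictly positive. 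For the RC both hypotheses fail in general: the RC is intrinsically degenerate (this is the paper's stated motivation --- the pointwise constraints $\beta_{k,i}(t)+\gamma_{k,i,j}(t)\ge \tilde{G}_{k,j}\eta_j(t)$ have zero right-hand sides, so slacks and the corresponding dual quantities sit at zero), and the lifted bases of two consecutive intervals typically differ in many of the $\beta,\gamma,v,r$ coordinates because different intervals select different worst-case realizations $\hat{\Xi}$, so adjacency at the RC level is lost as well. Your remark that one can ``argue at the level of solutions'' because Theorem \ref{thm.optimal} ``only requires the values'' misreads the theorem: strict positivity of exactly those values is its hypothesis, so the degeneracy cannot be quotiented away, and the ``bijection lemma'' you flag as the main obstacle is indeed where the route breaks.

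What is missing is an optimality certificate for (\ref{eqn:SCLP-RC2}) that survives degeneracy, and this is where the paper does the real work your sketch omits. The paper forms the RC of the uncertain Rates-LP, (\ref{eqn:RatesLP-RC2}), and its dual (\ref{eqn:RatesLP-RC2*}), and explicitly lifts the cutting-planes output: on the primal side $\beta^*,\gamma^*$ are built from the sets $\Ss^{k,\ell}_{i,\eta^*}$ (Proposition \ref{prop:prim_optimal}, including the objective identity (\ref{eqn:prim_objectives})), and on the dual side the multipliers of the final cutting-planes LP (\ref{eqn:dual_cutting}) are aggregated, $p_k'=\sum_{\ell\in L(k)}p^*_{k,\ell}$ together with $\dot{\delta}',y,\omega$, and shown feasible for (\ref{eqn:RatesLP-RC2*}) with equal objective value by strong duality. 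Assembling these interval-wise, with boundary values from (\ref{eqn.boundary}), yields a feasible, complementary-slack primal--dual pair for (\ref{eqn:SCLP-RC2}) and its symmetric dual, which certifies optimality by duality with no positivity or adjacency assumptions at the RC level. Theorem \ref{thm:prim_rates} alone certifies only the uncertain Rates-LP (\ref{eqn:un_rates1}); without the primal and, especially, the dual lifting (which your proposal never constructs), interval-wise robust optimality does not yet imply optimality of $\eta(t),x(t)$ for the full RC SCLP.
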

\begin{proof}
The proof of the theorem given in Appendix.       
\end{proof}

\section{Results and discussion}
\label{sec:discussion}
\subsection{Reduction algorithm}
\subsubsection{Setup of the experiment}
The performance of the reduction algorithm \ref{alg:reduction} was evaluated using a set of randomly generated uncertain SCLP problems. To simplify the testing, we considered a model without routing, resulting in a single outflow for each buffer. The total number of servers denoted by  $I = 10 \iota$, where $\iota =1,\dots,10$ \, and the number of buffers was chosen in different proportions as $K = 2 m I$, where $m=1,\dots,5$. The total number of input buffers for each flow was randomly drawn from the integer uniform distribution $n \sim \text{Unif}(1, \theta K)$, where $\theta$ is a given parameter and then the set of size $n$ of input buffers is randomly generated. Flows served by specific servers were chosen randomly, and the uncertainty budget for each server was set proportionally to the number of served flows, defined as $\Gamma_i = \kappa \sum_{j:s(j)=i} 1$. For each combination of parameters, we generated 10 random problems and calculated the average number of additional variables required to construct the RC after applying the reduction algorithm. To simplify computations, uncertainty in the objective function was not taken into account.

The number of additional variables before reduction depends solely on the problem dimensions and is given by $K \times (K+I)$.  Thus, we calculated the relative reduction as $R = 100\% - ($number of additional variables after reduction$)/($total number of additional variables$)$. 
\subsubsection{Results} We found that the relative reduction depends on the number of input buffers per flow and the uncertainty budget, while the problem size and number of buffers do not significantly affect the reduction.Figure \ref{fig:graph1} presents average data on relative reduction for different sizes of budgets and different numbers of input buffers for each flow.
\begin{figure}[H]
\label{fig:graph1}
\centering
\includegraphics[scale=0.5]{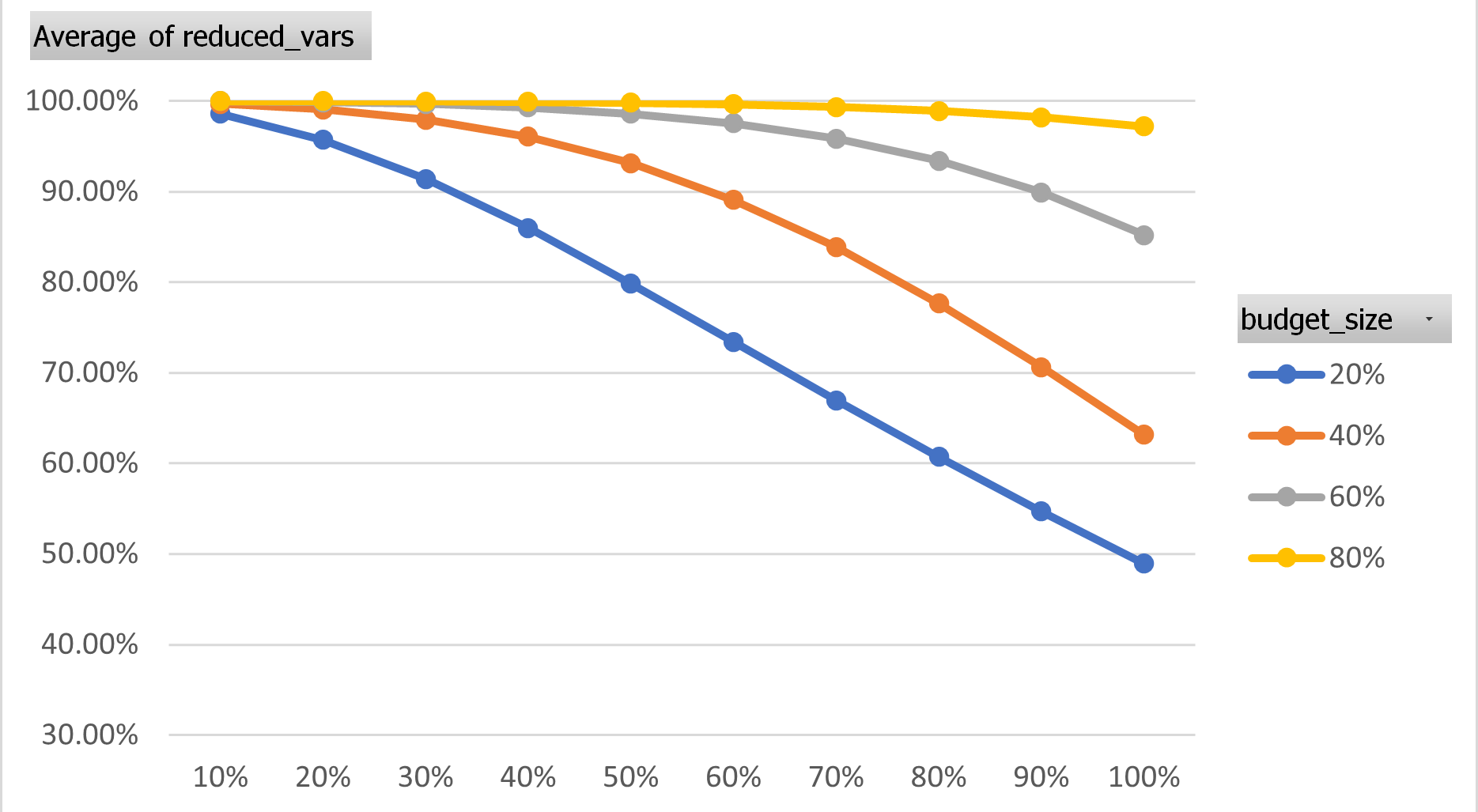}
\caption{Relative number of reduced variables}
\end{figure}
Here the horizontal axis represent the values of $\theta$ and different lines corresponds to different values of $\kappa$.
It can be observed that the number of reduced variables varies from almost  $100\%$ to $50\%$, increasing with the size of the budget and decreasing with the number of input buffers per flow.

\subsection{Cutting planes for SCLP}
The implementation of the cutting planes method for SCLP is currently pending, and hence, we lack empirical results. Nevertheless, we can evaluate the efficiency of the proposed approach by comparing it to methods for obtaining a robust optimal solution for uncertain SCLP that starts from the construction of RC (\ref{eqn:SCLP-RC1} or (\ref{eqn:SCLP-RC2}.

Firstly, it is possible to discretize the time and then solve the resulting LP. However, this method appears unpromising. Numerical study of deterministic SCLP, as conducted in \cite{Shindin2021}, indicates that the solution obtained from the discretized LP could be significantly far from the optimum (with relative error reaching up to $100\%$ for a $10\times$ discretization) or may require considerable computation time (up to $1000\times$ more than the Revised SCLP-Simplex algorithm for a $1000\times$ discretization). Moreover, the RC introduces a substantial number of additional constraints and variables to the nominal problem, leading to a considerable increase in problem dimensions. For instance, a relatively small model with $10$ servers and $100$ buffers could yield an RC with over $10000$ additional variables and constraints. Time discretization worsens this issue, leading to the construction of an LP with over $10$ million variables and constraints, which is essential for achieving accurate solution.

Secondly, it is possible to perturb RC and then solve by Revised SCLP-Simplex. The drawbacks of this approach have already been discussed in Section \ref{sec:introduction}. Moreover, perturbation introduces many small time intervals, breaking the structure of the optimal solution, so it will be required to restore solution of the original problem. Finally, the Revised SCLP-Simplex algorithm has been tested only up to the scale of $10000$ variables and constraints and may encounter numerical instabilities when problem dimensions exceed these limits. Hence, even a relatively modest uncertain model with $10$ servers and $100$ buffers could become too large for this method.

On the other hand, the cutting planes algorithm for SCLP offers several advantages:
\begin{compactitem}
    \item It does not introduce additional primal and/or dual state variables to the SCLP, ensuring that computationally intensive steps (such as those in \ref{eqn.breakponts} and \ref{eqn.othervalues}) are performed on SCLP of original dimensions.
    \item It affects only the Rates-LP, and in general, for uncertain LPs with polyhedral uncertainty sets, cutting planes are faster than solving the corresponding RC, as demonstrated in \cite{Fischetti2012} and \cite{Bertsimas2016} through numerical studies.
    \item It requires to add worst-case realizations only to the subset of constraints of Rates-LP$(\K^*, \J^*)$, so that for a large $\K^*$ number of additional computations will be relatively small.
    \item Since the bases of Rates-LP are adjacent, it is possible to initiate the solution of Rates-LP with new sign restrictions from the robust optimal solution of the adjacent Rates-LP, thereby minimizing the number of iterations of the cutting planes algorithm (\ref{alg:cutting_planes_LP}). 
\end{compactitem}
\section{Conclusion and Feature work}
In this paper we presented an efficient approach to solve uncertain SCLP problems. We plan to implement the cutting planes algorithm and perform numerical study, similar to the one considered in \cite{Shindin2021}. Additionally, we aim to explore the adjustable robust methodology for uncertain SCLP, developing theories and algorithms similar to those established for LP problems.

\bibliography{ERC_bib1} 
\bibliographystyle{ieeetr}      
 \appendix
 \subsection{Proofs}
\label{appendix:proofs}
\begin{proof}\emph{[Theorem \ref{thm:new-rc}]} 
In order to build RC of (\ref{eqn:ASCLP}), recall that the first constraint set could be formulated as (\ref{eqn:RSCLP_constr_k}), where the optimization problem could be further decoupled to a set of sub-problems (\ref{eqn:RSCLP_sub}).
The symmetric dual of (\ref{eqn:RSCLP_sub}) takes the following form (see e.g. \cite{Shapiro2001}):
\begin{equation}
\label{eqn:RSCLP_sub*}
\arraycolsep=1.5pt
    \begin{array}{ll}
        z^*_{k,i}(t) {=}  &  \min\limits_{\beta(t), \gamma(t)} \int\limits_0^t \Gamma_i \beta_{k,i}(s) {+} \sum\limits_{j \in \N_{i,k}} \gamma_{k,i,j}(s)  ds, \\
        \text{s.t.} & \beta_{k,i}(t) {+} \gamma_{k,i,j}(t) {\ge} \tilde{G}_{k,j} \eta_j(t) \, (\forall j {\in} \N_{i,k}),\\
        & \beta(t), \gamma(t) \ge 0,
    \end{array}
\end{equation}
where $\N_{i,k} = \{j: \{s(j) = i\} \cap \{\tilde{G}_{k,j} > 0\}\}$. One can see that replacement of $\N_{i,k}$  to $\N_{i,k}' = \{j: s(j) = i\}$ in (\ref{eqn:RSCLP_sub}) and (\ref{eqn:RSCLP_sub*}) does not affect constraints or objective values. Thus, substituting (\ref{eqn:RSCLP_sub*})
for all $k,i$ into (\ref{eqn:ASCLP}) we obtain first and second set of constraints of (\ref{eqn:SCLP-RC2}).

Similarly, the objective functional of (\ref{eqn:ASCLP}) could be represented by:
\begin{equation}
\label{eqn:RSCLP_obj}
\begin{array}{l}
    \int\limits_0^T (T-t) \left(\overline{c}\Tt \eta(t) - z_0(t) \right) dt  \\
    \begin{array}{ll}
        z_0(t) =  & \max\limits_{\Xi(t)} \int_0^T \sum\limits_{j=1}^J  \Xi_{j}(t) \tilde{c}_{j} \eta_j(t) dt \\
        \text{s.t.} & \sum\limits_{j: s(j) = i} \Xi_{j}(t) \le \Gamma_i\, \forall i, \quad 0 \le \Xi(t) \le 1 
    \end{array}
\end{array} 
\end{equation}
Following the same arguments one can decompose this and then formulate symmetric dual problems, resulting in the objective and in the third set of constraints of (\ref{eqn:SCLP-RC2}).
\end{proof}
\begin{proof}\emph{[Theorem \ref{thm:prim_rates}]}
We are solving linear programming problem with a polyhedral uncertainty set by the cutting planes method. Thus, by Theorem \ref{thm:gen_cutting}, Algorithm \ref{alg:cutting_planes_LP} produces a robust optimal solution of the uncertain Rates-LP$(\K^*. \J^*)$.
\end{proof}

To proof Theorem \ref{thm:sclp_alg} we need to establish several results, related to the Rates-LP.
\begin{proposition}
  RC of uncertain Rates-LP$(\K^*, \J^*)$ (\ref{eqn:un_rates1}) has the following form:
  \begin{equation}
\arraycolsep=1.5 pt
\begin{array}{ll}
 \label{eqn:RatesLP-RC2}
\displaystyle \max_{\eta,\beta,\gamma} & \overline{c}\Tt \eta  - \sum\limits_{i=1}^I \left(\Gamma_i \beta_{0,i} + \sum\limits_{j: s(j)=i}\gamma_{0,i,j}\right)   \\
 \mbox{s.t.} &  \sum\limits_{j=1}^J \overline{G}_{k,j} \eta_j  {+} \mkern-6mu \sum\limits_{i=1}^I\mkern-3mu \left(  \mkern-3mu \Gamma_i \beta_{k, i} {+}\mkern-18mu \sum\limits_{j: s(j)=i} \mkern-18mu \gamma_{k, i, j} \mkern-6mu \right)\mkern-6mu  {+} \dot{x}_k {=} a_k\, \forall k, \\
  &  \beta_{k, i} {+} \gamma_{k, i, j} {-} \tilde{G}_{k,j} \eta_j {-} v_{k,i,j} {=} 0,\, \forall k, i, j{:} s(j) {=} i,\\
   & \beta_{0, i} {+} \gamma_{0,i,j} {-} \tilde{c}_j  \eta_j  - r_{i,j} = 0,\, \forall i, j{:} s(j){=}i, \\
 &  [H I]\eta = b, \\
& \dx_k \ge 0 \text{ if } k \not\in \K^*,\; \beta, \gamma, v, r \ge 0, \\
& \eta_j = 0 \text{ if } j \in \J^*,\; \eta_j \ge 0 \text{ if } j \not\in \J^*.
\end{array}
\end{equation}
\end{proposition}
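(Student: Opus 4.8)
The plan is to follow the same line as the proof of Theorem~\ref{thm:new-rc}, but applied to the static linear program (\ref{eqn:un_rates1}) (instantiated at $\K^*,\J^*$) instead of to the continuous-time problem (\ref{eqn:ASCLP}); in fact the argument is shorter because there is no integral to carry along. First I would put each uncertain row of (\ref{eqn:un_rates1}) in eliminated form. For a row $k\notin\K^*$ the slack $\dx_k$ is sign-restricted, so the uncertain equality $\sum_j(\overline{G}_{k,j}+\tilde{G}_{k,j}\Xi_j)\eta_j+\dx_k=a_k$ together with $\dx_k\ge 0$ is equivalent to the inequality $\sum_j(\overline{G}_{k,j}+\tilde{G}_{k,j}\Xi_j)\eta_j\le a_k$, whose robust version is
\[
\textstyle \sum_j \overline{G}_{k,j}\eta_j \;+\; \max_{\Xi}\ \sum_j \tilde{G}_{k,j}\Xi_j\eta_j \;\le\; a_k ,
\]
the inner maximum being over $0\le\Xi\le 1$ and $\sum_{s(j)=i}\Xi_j\le\Gamma_i$; for a row $k\in\K^*$ the free slack $\dx_k$ means the equality only defines $\dx_k$ and imposes no robust restriction, exactly the observation already used in Section~\ref{sec:cutting}. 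The same elimination on the cost turns $\max\,[(\overline{c}-\tilde c\Xi)\Tt\,0]\eta$ into $\max_\eta\big(\overline{c}\Tt\eta-\max_{\Xi}\sum_j\tilde c_j\Xi_j\eta_j\big)$ with the same inner feasible set.

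Next I would decouple each inner maximum over the servers. Since the only coupling constraints $\sum_{s(j)=i}\Xi_j\le\Gamma_i$ act within a single server, the worst case over $\Xi$ separates into independent subproblems, one per $(k,i)$ (and one per $i$ for the cost), of exactly the form (\ref{eqn:RSCLP_sub}). Each is a bounded feasible LP, so I may take its LP dual with equal and attained optimal value: the multiplier of the budget row is $\beta_{k,i}\ge 0$ and the multipliers of the bounds $\Xi_j\le 1$ are $\gamma_{k,i,j}\ge 0$, giving $\min\{\Gamma_i\beta_{k,i}+\sum_j\gamma_{k,i,j}\ :\ \beta_{k,i}+\gamma_{k,i,j}\ge\tilde G_{k,j}\eta_j,\ \beta_{k,i},\gamma_{k,i,j}\ge 0\}$, and analogously $\beta_{0,i},\gamma_{0,i,j}$ for the cost. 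As in the proof of Theorem~\ref{thm:new-rc}, replacing the index set $\N_{i,k}=\{j:s(j)=i,\ \tilde G_{k,j}>0\}$ by the full set $\{j:s(j)=i\}$ changes neither the maximum (the extra indices have $\tilde G_{k,j}\eta_j\le 0$, so $\Xi_j=0$ is optimal) nor the dual optimum (the corresponding $\gamma_{k,i,j}$ can be taken to be $0$), which is what permits the blocks of (\ref{eqn:RatesLP-RC2}) to be written uniformly over all $j$ with $s(j)=i$.

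Then I would substitute these dual minimizations back in. In the constraint for $k\notin\K^*$, ``$\sum_j\overline G_{k,j}\eta_j+(\text{worst case})\le a_k$'' becomes ``there exist $\beta,\gamma\ge 0$ with $\beta_{k,i}+\gamma_{k,i,j}\ge\tilde G_{k,j}\eta_j$ and $\sum_i(\Gamma_i\beta_{k,i}+\sum_j\gamma_{k,i,j})\le a_k-\sum_j\overline G_{k,j}\eta_j$'', i.e. the first constraint block of (\ref{eqn:RatesLP-RC2}) with $\dx_k$ reintroduced as the (nonnegative) slack; writing the inequalities $\beta_{k,i}+\gamma_{k,i,j}\ge\tilde G_{k,j}\eta_j$ in equality form with nonnegative slacks $v_{k,i,j}$ gives the second block, and doing the same for the cost subproblem with slacks $r_{i,j}$ gives the third. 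For rows $k\in\K^*$ one simply keeps $\beta_{k,i},\gamma_{k,i,j},v_{k,i,j}$ and the equality with free $\dx_k$, which is vacuous but keeps the formulation uniform. Finally, the outer maximization over $\eta$ composed with the inner minimization over $\beta_0,\gamma_0$ collapses into the single maximization $\max_{\eta,\beta_0,\gamma_0}\big(\overline c\Tt\eta-\sum_i(\Gamma_i\beta_{0,i}+\sum_j\gamma_{0,i,j})\big)$, which is the objective of (\ref{eqn:RatesLP-RC2}); the deterministic rows $[H\,I]\eta=b$ and the sign restrictions $\dx_k\ge 0\ (k\notin\K^*)$, $\eta_j=0\ (j\in\J^*)$, $\eta_j\ge 0\ (j\notin\J^*)$ carry over untouched, completing the identification with (\ref{eqn:RatesLP-RC2}).

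Each individual step is a routine robust-optimization manipulation, so the main obstacle is bookkeeping rather than mathematics: one must be careful that robustifying an \emph{equality} with a sign-restricted slack is legitimate (equivalently, that the reduction to the ``$\le$'' form holds precisely for $k\notin\K^*$), that retaining the $\beta,\gamma,v$ variables for the free rows $k\in\K^*$ introduces no spurious restriction, and that the $\N_{i,k}\to\{j:s(j)=i\}$ enlargement really leaves every optimal value unchanged. These are exactly the points already needed for Theorem~\ref{thm:new-rc}, so the cleanest write-up proves them once there and invokes them here.
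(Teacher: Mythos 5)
Your proposal is correct and follows essentially the same route as the paper's own proof: separate the worst case of each uncertain row (and of the cost) into per-server budgeted subproblems of the form (\ref{eqn:RSCLPN_sub_sub}), replace each inner maximum by its LP dual in the variables $\beta_{k,i},\gamma_{k,i,j}$ (respectively $\beta_{0,i},\gamma_{0,i,j}$), substitute back into (\ref{eqn:un_rates1}), and introduce the slacks $v_{k,i,j}$, $r_{i,j}$ and $\dx_k$ to reach (\ref{eqn:RatesLP-RC2}). Your extra care about the $k\in\K^*$ versus $k\notin\K^*$ rows and the $\N_{i,k}$ enlargement only makes explicit points the paper handles implicitly (in Section \ref{sec:cutting} and the proof of Theorem \ref{thm:new-rc}), so no gap.
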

\begin{proof}
Recall that constraints of Rates-LP$(\K^*, \J^*)$ (\ref{eqn:un_rates1}) holds for all possible realization of the uncertainty if and only if $\Xi$ is an optimal solution of the problem (\ref{eqn:RSCLPN_sub_sub}). The dual problem for (\ref{eqn:RSCLPN_sub_sub}) is:
\begin{equation}
\label{eqn:RSCLPN_sub_sub*}
\arraycolsep=1.5pt
    \begin{array}{ll}
        z^*_{k,i}(t) {=}  & \min\limits_{\beta, \gamma} \Gamma_i \beta_{k,i} + \sum\limits_{j \in \N_{i,k}} \gamma_{k,i,j}, \\
        \text{s.t.} &  \beta_{k,i} + \gamma_{k,i,j} \ge \tilde{G}_{k,j} \eta_j \, (\forall j \in \N_{i,k}),\\
        & \beta, \gamma \ge 0,
    \end{array}
\end{equation}

Similarly, the objective function of  (\ref{eqn:un_rates1}) could be represented by $\overline{c}\Tt \eta - z_0$, where $z_0$ is an optimal solution of the following problem:
\begin{equation}
\label{eqn:RSCLPN_sub_sub_obj}
    \begin{array}{ll}
        \displaystyle z_0 =  & \max\limits_{\Xi} \sum\limits_{j=1}^J  \Xi_{j} \tilde{c}_{j} \eta_j \\
        \text{s.t.} & \sum\limits_{j: s(j) = i} \Xi_{j} \le \Gamma_i\, \forall i, \quad 0 \le \Xi \le 1 
    \end{array}
\end{equation}
Once can see, that formulating the dual of (\ref{eqn:RSCLPN_sub_sub_obj}) and substituting it together with (\ref{eqn:RSCLPN_sub_sub*}) back to the Rates-LP$(\K^*, \J^*)$ (\ref{eqn:un_rates1})  and then introducing slack variables we obtain RC that given by (\ref{eqn:RatesLP-RC2}).
\end{proof}
\begin{proposition}
\label{prop:prim_optimal}
Optimal solution of (\ref{eqn:RatesLP-RC2}) could be obtained from the cutting planes algorithm \ref{alg:cutting_planes_LP} by the same $\eta^*, \dx^*$, and by:
\[
\beta^*_{k,i} =\mkern-24mu \min_{j\in \Ss^{k,\overline{\ell}}_{j, \eta^*}(\lceil\Gamma_i\rceil)}\mkern-12mu \{0, \hat{G}^{\overline{\ell}}_{k,j} \eta^*_j \}, \; \gamma^*_{k,i,j} {=} \max_{s(j)=i}\{0, \tilde{G}_{k,j}\eta^*_j - \beta^*_{k,i}\},
\]
where $\overline{\ell} = \arg\max_\ell \sum_j \hat{G}^{\ell}_{k,j} \eta^*_j$.
\end{proposition}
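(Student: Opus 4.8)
The plan is to turn the claim into a strong-duality bookkeeping between the inner worst-case ``knapsack'' problems and their linear-programming duals, and then to invoke Theorem~\ref{thm:prim_rates}. First I would recall, exactly as in the proof of the preceding proposition, that (\ref{eqn:RatesLP-RC2}) is obtained from the uncertain Rates-LP$(\K^*,\J^*)$ (\ref{eqn:un_rates1}) by substituting, for every $k\notin\K^*$ and every $i$, the dual (\ref{eqn:RSCLPN_sub_sub*}) of the inner maximization (\ref{eqn:RSCLPN_sub_sub}) in place of its value $z_{k,i}$, together with the dual of (\ref{eqn:RSCLPN_sub_sub_obj}) in place of $z_{0}$. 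By linear-programming strong duality, for every fixed feasible $\eta$ the optimal value of the $(\beta,\gamma)$-minimization (\ref{eqn:RSCLPN_sub_sub*}) equals $z_{k,i}(\eta)$, and likewise for the objective term. Hence (\ref{eqn:RatesLP-RC2}) is equivalent to the ``projected'' robust problem $\max_{\eta}\{\overline{c}\Tt\eta-z_{0}(\eta)\}$ subject to $\sum_{j}\overline{G}_{k,j}\eta_{j}+\sum_{i}z_{k,i}(\eta)+\dx_{k}=a_{k}$, $[H\,I]\eta=b$ and the sign restrictions: a pair $(\eta^{*},\dx^{*})$ is optimal there if and only if it extends to an optimum of (\ref{eqn:RatesLP-RC2}) by picking, for each $(k,i)$ (and each $i$ in the objective), an optimal solution of the inner dual at $\eta^{*}$, with $v^{*},r^{*}$ the induced slacks.

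Second, I would observe that this projected problem is precisely the robust counterpart of the uncertain Rates-LP, so Theorem~\ref{thm:prim_rates} gives that the $(\eta^{*},\dx^{*})$ returned by Algorithm~\ref{alg:cutting_planes_LP} is optimal for it. It then remains to exhibit the matching inner-dual optimizers and check they are the stated $\beta^{*},\gamma^{*}$. The inner problem (\ref{eqn:RSCLPN_sub_sub}) is a continuous-knapsack whose primal optimum is the $\hat{\Xi}^{\overline{\ell}}_{k}$ of (\ref{eqn:RLP_constr_k_sol}); I would verify that the proposed $\beta^{*}_{k,i},\gamma^{*}_{k,i,j}$ are dual-feasible ($\beta^{*},\gamma^{*}\ge 0$ and $\beta^{*}_{k,i}+\gamma^{*}_{k,i,j}\ge\tilde{G}_{k,j}\eta^{*}_{j}$) and satisfy complementary slackness with $\hat{\Xi}^{\overline{\ell}}_{k}$ --- unit components of $\hat{\Xi}$ forcing $\gamma^{*}_{k,i,j}=\tilde{G}_{k,j}\eta^{*}_{j}-\beta^{*}_{k,i}>0$, zero components forcing $\gamma^{*}_{k,i,j}=0$, and a fractional component pinning $\beta^{*}_{k,i}$ to the threshold value, i.e.\ the smallest $\tilde{G}_{k,j}\eta^{*}_{j}$ over $\Ss^{k}_{i,\eta^{*}}(\lceil\Gamma_{i}\rceil)$. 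By LP duality this yields $\Gamma_{i}\beta^{*}_{k,i}+\sum_{j}\gamma^{*}_{k,i,j}=z_{k,i}(\eta^{*})$, and summing over $i$ --- using that $\overline{\ell}=\arg\max_{\ell}\sum_{j}\hat{G}^{\ell}_{k,j}\eta^{*}_{j}$ selects the iterate whose accumulated cut attains $\sum_{i}z_{k,i}(\eta^{*})$ --- shows that the substituted $k$-th equality of (\ref{eqn:RatesLP-RC2}) collapses to $\sum_{j}(\overline{G}_{k,j}+\hat{G}^{\overline{\ell}}_{k,j})\eta^{*}_{j}+\dx^{*}_{k}=a_{k}$, which is exactly how line~\ref{alg:cutting_planes_LP:line_return} sets $\dx^{*}_{k}$. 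Applying the same argument to the objective with $\Ss^{0}$ and $\hat{c}$, the point $(\eta^{*},\dx^{*},\beta^{*},\gamma^{*},v^{*},r^{*})$ is feasible for (\ref{eqn:RatesLP-RC2}) and has objective value $\overline{c}\Tt\eta^{*}-z_{0}(\eta^{*})$, which by the reduction above is the optimal value; hence it is optimal.

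I expect the main obstacle to be the complementary-slackness step for the inner knapsack, carried out uniformly over all budget regimes: integer versus non-integer $\Gamma_{i}$ (so that $\lfloor\Gamma_{i}\rfloor=\lceil\Gamma_{i}\rceil$ or not), ties among the values $\tilde{G}_{k,j}\eta^{*}_{j}$, and an empty support $\Ss^{k}_{i,\eta^{*}}(\lceil\Gamma_{i}\rceil)$ forcing $\beta^{*}_{k,i}=\gamma^{*}_{k,i,j}=0$ and $z_{k,i}=0$ --- precisely the degeneracy that makes the knapsack optimizer (\ref{eqn:RLP_constr_k_sol}) and hence the dual reconstruction non-unique. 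Reconciling the stated closed form for $\beta^{*}_{k,i}$ (written through $\hat{G}^{\overline{\ell}}$ and a $\min$ with $0$) with the threshold characterization in each of these regimes, and checking that every admissible choice of the tie-breaking sets $\Ss^{k}_{i,\eta^{*}}(L)$ yields the same optimal value so that the reconstruction is well posed, is the delicate part; the remainder is routine LP strong duality and the equivalence established in the first paragraph.
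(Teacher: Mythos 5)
Your proposal takes essentially the same route as the paper's proof: invoke Theorem~\ref{thm:prim_rates} to get optimality of $(\eta^*,\dx^*)$ for the uncertain Rates-LP, then verify that the reconstructed $(\beta^*,\gamma^*)$ are nonnegative, satisfy $\beta^*_{k,i}+\gamma^*_{k,i,j}\ge \tilde G_{k,j}\eta^*_j$, and attain exactly the worst-case value $\Gamma_i\beta^*_{k,i}+\sum_j\gamma^*_{k,i,j}=z_{k,i}(\eta^*)$ (the paper's key displayed identity), so the point is feasible for (\ref{eqn:RatesLP-RC2}) with the same objective value and hence optimal. You merely make explicit, via inner-knapsack LP duality and complementary slackness (including the fractional/tie/empty-support cases), what the paper asserts with ``one can see,'' so the argument is correct and matches the paper's.
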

\begin{proof}
By Theorem \ref{thm:prim_rates} the cutting planes algorithm \ref{alg:cutting_planes_LP} provides an optimal solution for the uncertain Rates-LP$(\K^*, \J^*)$ (\ref{eqn:un_rates1}). Hence, we need to check the feasibility of the obtained solution and equality of the objective values.
By construction we have $\beta^*, \gamma^* \ge 0$ and
\[
\beta^*_{k,i} + \gamma^*_{k,i,j} - \tilde{G}_{k,j} \eta^*_j \ge 0
\]
Furthermore, one can see that:
\[
\textstyle \sum\limits_{j \in \Ss^k_{j, \eta^*}(\lceil\Gamma_i\rceil)} \mkern-24mu \hat{G}^{\overline{\ell}}_{k,j} \eta^*_j =  \mkern-24mu\sum\limits_{j \in \Ss^k_{j, \eta^*}(\lceil\Gamma_i\rceil)}  \mkern-24mu \hat{\Xi}_{k,j} \tilde{G}_{k,j} \eta^*_j = \Gamma_i \beta^*_{k,i} {+}\mkern-12mu \sum\limits_{j:s(j){=}i} \gamma^*_{k,i,j}
\]
where $\hat{\Xi}$ is solution of (\ref{eqn:RSCLPN_sub_sub}) given by (\ref{eqn:RLP_constr_k_sol}). Hence:
\begin{align*}
\textstyle \sum\limits_{j=1}^J \overline{G}_{k,j} \eta^{*}_j  + \sum\limits_{i=1}^I \left(   \Gamma_i \beta^{*}_{k, i} {+} \sum\limits_{j: s(j)=i}  \gamma^{*}_{k, i, j}  \right) + \dot{x}^{*}_k {=} a_k
\end{align*}
By the similar arguments for the objective values we have:
\begin{align}
\label{eqn:prim_objectives}
\textstyle (\overline{c} - \hat{c})\Tt \eta^* = \overline{c}\Tt \eta^* - \sum\limits_{i=1}^I \left(   \Gamma_i \beta^{*}_{0, i} {+} \sum\limits_{j: s(j)=i}  \gamma^{*}_{0, i, j}  \right) 
\end{align}

\end{proof}
The dual problem of (\ref{eqn:RatesLP-RC2}) is given by:
\begin{equation}
\arraycolsep=1.5 pt
\begin{array}{ll}
 \label{eqn:RatesLP-RC2*}
\displaystyle \min_{p, \dot{q}} & a\Tt p  + [b\Tt\; 0] \dq   \\
 \mbox{s.t.} & \mkern-6mu \sum\limits_{k=1}^K\mkern-6mu \left(\overline{G}_{k,j} p_k  {+}  \tilde{G}_{k,j}  \dot{\delta}_{k,j} \right)\mkern-6mu  {+} \tilde{c}_j  \dot{\delta}_{0,j} {+}\mkern-18mu \sum\limits_{i=J+1}^{J+I} \mkern-15mu H_{i,j} \dq_i {-} \dq_{j} {=} \overline{c}_j\\
  &  p_k - \dot{\delta}_{k,j} - y_{k,j} = 0,\, \forall k, j,\\
  &  \dot{\delta}_{0,j} + y_{0,j} = 1, \forall j,\, \\
   & \Gamma_i p_k - \sum\limits_{j:s(j)=i} \dot{\delta}_{k,j} - \omega_{k,i} = 0,\, \forall k, i  \\
 &  \sum\limits_{j:s(j)=i} \mkern-6mu \dot{\delta}_{0,j} + \omega_{0,j} = \Gamma_i,\, \forall i,\\ 
 & p_k \ge 0 \text{ if } k \not\in \K^*, \; p_k = 0 \text{ if } k \in \K^*,\\
& \dq_j \ge 0 \text{ if } j \not\in \J^*,\; \dot{\delta}, y, \omega \ge 0.
\end{array}
\end{equation}
while the dual problem for the problem obtained in the final iteration of the cutting planes algorithm \ref{alg:cutting_planes_LP} is:
\begin{equation}
\arraycolsep=1.5 pt
\begin{array}{ll}
\label{eqn:dual_cutting}
 \min\limits_{p,\dq} & \sum\limits_{k=1}^K \sum\limits_{\ell {\in} L(k)}   a_k p_{k, \ell}  + \sum\limits_{i=1}^{I} b_{i} \dq_i    \\
 \mbox{s.t.} & \mkern-6mu  \sum\limits_{\mkern-3mu k=1}^K\mkern-3mu\sum\limits_{\ell {\in} L(k)} \mkern-12mu (\overline{G}_{k,j} {+} \hat{G}^{\ell}_{k,j}) p_{k,\ell} {+}\mkern-6mu \sum\limits_{i=1}^{I}\mkern-6mu  H_{i,j} \dq_{i} {-} \dq_j {=} \overline{c}_j {-} \hat{c}_j,\\
 & \mkern-16mu p_{k,\ell} {\ge} 0 \text{ if } k {\not\in} \K^*, \; p_{k,\ell} {=} 0 \text{ if } k {\in} \K^*, \dq_j {\ge} 0 \text{ if } j {\not\in} \J^*
\end{array}
\end{equation}
where by $L(k)$ we denote the set of iterations where constraint $k$ has been added to the Rates-LP$(\K^*, \J^*)$, including iteration $0$, where the nominal problem has been solved.

Note, that following relation holds between optimal solutions of (\ref{eqn:dual_cutting}) and (\ref{eqn:RatesLP-RC2*}).
\begin{theorem}
    Let $p_{k,\ell}^*, \dq^*$ be the optimal solution of (\ref{eqn:dual_cutting}), then the optimal solution of (\ref{eqn:RatesLP-RC2*}) given by: $\dq' = \dq^*,$ 
\begin{equation*}
\begin{aligned}
&  \textstyle p_k' {=}\mkern-6mu \sum\limits_{\ell \in L(k)}\mkern-6mu p^*_{k,\ell},\; \dot{\delta}_{0,j}' {=} 1 \text{ if } \hat{c}_j {=} \tilde{c}_j,\; \dot{\delta}_{0,j}' {=} 0 \text{ if } \hat{c}_j {=} 0,\\
& \textstyle \dot{\delta}_{k,j}' {=}  \sum\limits_{\ell:\{\hat{G}^\ell_{k,j} {=} \tilde{G}_{k,j}\}} p_{k,\ell}^* {+} \sum\limits_{\substack{{\ell:\{\hat{G}^\ell_{k,j} {=} (\Gamma_{s(j)}}\\{{-}\lfloor\Gamma_{s(j)}  \rfloor) \tilde{G}_{k,j}\}}}}  (\Gamma_{s(j)} {-} \lfloor\Gamma_{s(j)}  \rfloor) p^*_{k,\ell},\\
& \textstyle \dot{\delta}_{0,j}' {=} (\Gamma_{s(j)} {-} \lfloor\Gamma_{s(j)}  \rfloor) \text{ if } \hat{c}_j {=} (\Gamma_{s(j)} {-} \lfloor\Gamma_{s(j)}  \rfloor) \tilde{c}_j.
\end{aligned}
\end{equation*}
\end{theorem}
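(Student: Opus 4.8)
The plan is to establish the stated correspondence by combining strong LP duality on both sides with a direct feasibility-and-value check of the constructed point; structurally, the map $(p^*,\dq^*)\mapsto(p',\dq',\dot\delta',\dots)$ is the usual \emph{aggregation} that turns an optimal dual of the cutting-planes LP into an optimal dual of the robust counterpart. First I would record a chain of optimal-value equalities. By Theorem~\ref{thm:prim_rates} and Proposition~\ref{prop:prim_optimal}, $(\eta^*,\dx^*,\beta^*,\gamma^*,\dots)$ is optimal for the RC~(\ref{eqn:RatesLP-RC2}) with objective value $V:=(\overline c-\hat c)\Tt\eta^*$ by~(\ref{eqn:prim_objectives}), and by Theorem~\ref{thm:prim_rates} the same $V$ is the optimal value of the final cutting-planes primal LP, whose dual is~(\ref{eqn:dual_cutting}). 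Strong LP duality then forces~(\ref{eqn:dual_cutting}) and~(\ref{eqn:RatesLP-RC2*}) to share the value $V$ as well. Hence it suffices to check that the constructed vector $(p',\dq',\dot\delta',\dots)$, completed with the slacks $y',\omega'$ that the equality constraints of~(\ref{eqn:RatesLP-RC2*}) determine, is feasible for~(\ref{eqn:RatesLP-RC2*}) and attains objective value $V$ there.

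The value match is immediate: from $p'_k=\sum_{\ell\in L(k)}p^*_{k,\ell}$ and $\dq'=\dq^*$ we get $a\Tt p'=\sum_{k}\sum_{\ell\in L(k)}a_k p^*_{k,\ell}$ and $[b\Tt\;0]\dq'=[b\Tt\;0]\dq^*$, and their sum is exactly the objective of~(\ref{eqn:dual_cutting}) evaluated at $(p^*,\dq^*)$, which equals $V$. The crux of the feasibility check is the per-index identity
\[
\overline G_{k,j}\,p'_k+\tilde G_{k,j}\,\dot\delta'_{k,j}=\sum_{\ell\in L(k)}\bigl(\overline G_{k,j}+\hat G^{\ell}_{k,j}\bigr)p^*_{k,\ell},
\]
which holds because $\hat G^{\ell}_{k,j}\in\{0,\,(\Gamma_{s(j)}-\lfloor\Gamma_{s(j)}\rfloor)\tilde G_{k,j},\,\tilde G_{k,j}\}$ by~(\ref{eqn:RLP_constr_k_sol}), and $\dot\delta'_{k,j}$ is built by grouping the weights $p^*_{k,\ell}$ precisely according to which of those three values $\hat G^{\ell}_{k,j}$ takes. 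Summing this over $k$, inserting the result into the (only uncertainty-coupled) first constraint of~(\ref{eqn:dual_cutting}) at $(p^*,\dq^*)$, and adding $\tilde c_j\,\dot\delta'_{0,j}=\hat c_j$ — equally immediate from the three-case definition of $\dot\delta'_{0,j}$ matched against $\hat c_j\in\{0,(\Gamma_{s(j)}-\lfloor\Gamma_{s(j)}\rfloor)\tilde c_j,\tilde c_j\}$ — turns the right-hand side $\overline c_j-\hat c_j$ into $\overline c_j$, which is exactly the first constraint of~(\ref{eqn:RatesLP-RC2*}) (after the harmless re-indexing of the $\dq$-block between the two problems). The remaining equality constraints of~(\ref{eqn:RatesLP-RC2*}) simply serve as the definitions of $y'_{k,j}$, $y'_{0,j}$, $\omega'_{k,i}$, $\omega'_{0,i}$.

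The main obstacle will be the sign conditions $\dot\delta',y',\omega'\ge 0$. That $\dot\delta'\ge0$ is clear since its coefficients $\Gamma_{s(j)}-\lfloor\Gamma_{s(j)}\rfloor$ and $1$ lie in $[0,1]$ while $p^*_{k,\ell}\ge0$; the same observation gives $\dot\delta'_{k,j}\le\sum_{\ell\in L(k)}p^*_{k,\ell}=p'_k$, hence $y'_{k,j}=p'_k-\dot\delta'_{k,j}\ge0$, and $\dot\delta'_{0,j}\le1$ gives $y'_{0,j}=1-\dot\delta'_{0,j}\ge0$. The delicate cases are $\omega'_{k,i}=\Gamma_i p'_k-\sum_{j:s(j)=i}\dot\delta'_{k,j}$ and $\omega'_{0,i}=\Gamma_i-\sum_{j:s(j)=i}\dot\delta'_{0,j}$: the key is that, for each cut $\ell$, the numbers $\{\hat\Xi^{\ell}_{k,j}:s(j)=i\}$ read off from~(\ref{eqn:RLP_constr_k_sol}) constitute a feasible point of the sub-problem~(\ref{eqn:RSCLPN_sub_sub}), so $\sum_{j:s(j)=i}\hat\Xi^{\ell}_{k,j}\le\Gamma_i$; taking the $p^*_{k,\ell}$-weighted combination of this bound (and, for $\omega'_{0,i}$, the analogous bound coming from~(\ref{eqn:RSCLPN_sub_sub_obj})) yields the claimed inequalities. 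Finally, $p'_k\ge0$ for $k\notin\K^*$ and $p'_k=0$ for $k\in\K^*$ hold termwise from the corresponding properties of the $p^*_{k,\ell}$, and $\dq'_j\ge0$ for $j\notin\J^*$ is inherited from $\dq^*$. A little extra care is needed to line up the $\dq$-indexing between~(\ref{eqn:dual_cutting}) and~(\ref{eqn:RatesLP-RC2*}) and to exhibit the components $y',\omega'$ that the statement leaves implicit, but these are bookkeeping rather than substance; feasibility together with the matching value $V$ then forces $(p',\dq',\dot\delta',y',\omega')$ to be optimal for~(\ref{eqn:RatesLP-RC2*}).
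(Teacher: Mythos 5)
Your proposal is correct and takes essentially the same route as the paper: verify feasibility of the aggregated point $(p',\dq',\dot{\delta}')$ — with the budget-feasibility of each cut's $\hat{\Xi}^\ell$ (at most $\lfloor\Gamma_i\rfloor$ ones and one fractional entry per server) giving $\omega\ge 0$ — and then conclude optimality by matching the objective value through Proposition~\ref{prop:prim_optimal}, equation~(\ref{eqn:prim_objectives}), and strong duality. The only difference is cosmetic: you spell out the first-constraint verification via the per-index identity and $\tilde{c}_j\dot{\delta}'_{0,j}=\hat{c}_j$, which the paper dismisses as straightforward to check.
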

\begin{proof}
 First, we show that $p',\dot{\delta}', \dq'$ is a feasible solution of (\ref{eqn:RatesLP-RC2*}). It is straightforward to check that the first set of constraints of (\ref{eqn:RatesLP-RC2*}) holds. Furthermore, by construction we have:
 \[
 \textstyle
 p'_k - \dot{\delta}_{k,j}' {=} \mkern-12mu \sum\limits_{\ell \in L(k)} \mkern-12mu p_{k,\ell}^* {-} \mkern-24mu \sum\limits_{\ell:\hat{G}^\ell_{k,j} {=} \tilde{G}_{k,j}} \mkern-18mu p_{k,\ell}^* {-}\mkern-24mu \sum\limits_{\substack{{\ell:\{\hat{G}^\ell_{k,j} {=} (\Gamma_{s(j)}}\\{{-}\lfloor\Gamma_{s(j)}  \rfloor) \tilde{G}_{k,j}\}}}}\mkern-36mu (\Gamma_{s(j)} {-} \lfloor\Gamma_{s(j)}  \rfloor) p^*_{k,\ell} {\ge} 0
 \]
and $\dot{\delta}_{0,j}'\le 1$.
Moreover, recall that $\hat{G}^{\ell}_{k,j} = \tilde{G}_{k,j} \hat{\Xi}^\ell_{k,j}$, and note, that by (\ref{eqn:RLP_constr_k_sol}) for each $\ell, i$ we have maximum $\lfloor\Gamma_{i} \rfloor$ of $\Xi^{\ell}_{k,j}=1$ and maximum one $\Xi^{\ell}_{k,j}=\Gamma_{i} - \lfloor\Gamma_{i} \rfloor$, and hence: 
\[
\textstyle
\Gamma_i p'_k {-}\mkern-12mu \sum\limits_{j:s(j)=i}\mkern-12mu \dot{\delta}'_{k,j} {\ge}\mkern-12mu \sum\limits_{\ell\in L(k)}\mkern-12mu (\Gamma_i p^*_{k,\ell} {-}\mkern-18mu \sum\limits_{\Ss^{k,\ell}_{i,\eta^*}(\lfloor\Gamma_i\rfloor)}\mkern-18mu p^*_{k,\ell} {-} (\Gamma_{i} {-} \lfloor\Gamma_{i} \rfloor) p_{k,\ell}^*) {\ge} 0
\]
for each $i$. Furthermore, by the similar arguments:
\[
\textstyle
\sum\limits_{s(j)=i} \delta_{0,j} \le \sum\limits_{\Ss^0_{i, \eta^*}(\lfloor\Gamma_i\rfloor)} 1 + (\Gamma_{i} {-} \lfloor\Gamma_{i}  \rfloor) = \Gamma_i
\]

Finally, by Proposition \ref{prop:prim_optimal} (\ref{eqn:prim_objectives}) holds for the optimal solution obtained by the cutting planes algorithm \ref{alg:cutting_planes_LP} and the optimal solution of (\ref{eqn:RatesLP-RC2}) and hence by the strong duality we have:
\[
\textstyle
 a\Tt p'  {+} [b\Tt\; 0] \dq' {=} \sum\limits_{k=1}^K \sum\limits_{\ell {\in} L(k)}   a_k p_{k, \ell}  {+} \sum\limits_{i=1}^{I} b_{i} \dq_i {=} a\Tt p''  {+} [b\Tt\; 0] \dq''  
\]
where $p'', \dq''$ is an optimal solution of (\ref{eqn:RatesLP-RC2*}). Thus $p', \dq', \dot{\lambda}$ is also the optimal solution of (\ref{eqn:RatesLP-RC2*}).
\end{proof}

\begin{proof}\emph{[Theorem \ref{thm:sclp_alg}]}
One can see that (\ref{eqn:RatesLP-RC2}) is a Rates-LP of (\ref{eqn:SCLP-RC2}) and (\ref{eqn:RatesLP-RC2*}) is a Rates-LP of the symmetric dual of (\ref{eqn:SCLP-RC2}). Thus, $x(t), \eta(t), \beta(t), \gamma(t), v(t), r(t)$ could be obtained from the bases of (\ref{eqn:RatesLP-RC2}) and solution of (\ref{eqn.boundary}). The SCLP-simplex algorithm ensure that $x(t)$ is non-negative for all $t$, while  $\eta(t), \beta(t), \gamma(t), v(t), r(t)$ are non-negative for all $t$ by construction. Similarly, $p(t), q(t), \delta(t), y(t), \omega(t)$ could be obtained from the bases of (\ref{eqn:RatesLP-RC2*}) and solution of (\ref{eqn.boundary}),  where non-negativity of $q(t)$ for all $t$ enforced by the SCLP-simplex algorithm, while $p(t), \delta(t), y(t), \omega(t)$ are non-negative by construction. Thus we have a pair of the feasible solutions of (\ref{eqn:SCLP-RC2}) and its symmetric dual. Furthermore, one can check that these solutions are complementary slack and hence are optimal. Therefore, algorithm \ref{alg:sclp_simplex} with modifications \ref{def:modification} provide a robust optimal solution for the uncertain SCLP (\ref{eqn:ASCLP}) with one-sided budgeted uncertainty set.
\end{proof}

\end{document}